\theoremstyle{plain}
\newtheorem{theorem}{Theorem}[section]
\newtheorem{maintheorem}{Theorem}
\newtheorem{maincor}[maintheorem]{Corollary}
\newtheorem{proposition}[theorem]{Proposition}
\newtheorem{lemma}[theorem]{Lemma}
\newtheorem{corollary}[theorem]{Corollary}
\theoremstyle{definition}
\newtheorem{definition}[theorem]{Definition}
\newtheorem{example}[theorem]{Example}
\newtheorem{remark}[theorem]{Remark}
\newcommand{\nc}{\newcommand}
\nc{\dmo}{\DeclareMathOperator}
\nc{\Q}{\mathbb{Q}}
\nc{\F}{\mathbb{F}}
\nc{\R}{\mathbb{R}}
\nc{\Z}{\mathbb{Z}}
\nc{\C}{\mathbb{C}}
\nc{\N}{\mathbb{N}}
\nc{\Ell}{\mathcal{L}}
\nc{\M}{\mathcal{M}}
\nc{\K}{\mathcal{K}}
\nc{\I}{\mathcal{I}}
\nc{\T}{\mathcal T}
\nc{\U}{\mathcal U}
\nc{\disk}{\mathbb{D}}
\nc{\hyp}{\mathbb{H}}
\nc{\CP}{\mathbb{CP}}
\nc{\RP}{\mathbb{RP}}
\dmo{\Mod}{Mod}
\dmo{\PMod}{PMod}
\dmo{\LMod}{LMod}
\dmo{\Diff}{Diff}
\dmo{\Homeo}{Homeo}
\dmo{\dist}{dist}
\dmo\BDiff{BDiff}
\dmo\SO{SO}
\dmo\Hom{Hom}
\dmo\SL{SL}
\dmo\rank{rank}
\dmo\sig{sig}
\dmo\Out{Out}
\dmo\Aut{Aut}
\dmo\Inn{Inn}
\dmo\GL{GL}
\dmo\PGL{PGL}
\dmo\Gr{Gr}
\dmo\PSL{PSL}
\dmo\BHomeo{BHomeo}
\dmo\EHomeo{EHomeo}
\dmo\EDiff{EDiff}
\dmo\Disc{Disc}
\dmo\Aff{Aff}
\renewcommand{\bar}{\overline}
\dmo\Teich{Teich}
\dmo\Fix{Fix}
\nc{\pair}[1]{\ensuremath{\left\langle #1 \right\rangle}}
\nc{\abs}[1]{\ensuremath{\left| #1 \right|}}
\nc{\action}{\circlearrowright}
\nc{\norm}[1]{\left | \left | #1 \right | \right |}
\nc{\abcd}[4]{\ensuremath{\left(\begin{array}{cc} #1 & #2 \\ #3 & #4 \end{array}\right)}}
\dmo{\Isom}{Isom}
\nc{\normal}{\vartriangleleft}
\dmo{\Vol}{Vol}
\dmo{\im}{Im}
\dmo{\Push}{Push}
\dmo{\Conf}{Conf}
\dmo{\PConf}{PConf}
\dmo{\PB}{PB}
\dmo{\id}{id}
\dmo{\Jac}{Jac}
\dmo{\Pic}{Pic}
\dmo{\Stab}{Stab}
\dmo{\Arf}{Arf}
\dmo{\End}{End}
\dmo{\Gal}{Gal}
\dmo{\lcm}{lcm}
\dmo{\ab}{ab}
\dmo{\opp}{op}
\dmo{\SU}{SU}
\dmo{\OT}{\Omega \mathcal{T}}
\dmo{\OM}{\Omega \mathcal{M}}
\dmo{\PH}{\mathbb{P}\mathcal{H}}
\dmo{\spin}{spin}
\dmo{\even}{even}
\dmo{\odd}{odd}
\dmo{\comp}{\mathcal{H}}
\dmo{\Mgk}{\mathcal{M}_{g, \underline{\kappa}}}
\dmo{\orb}{orb}
\dmo{\AJ}{AJ}
\dmo{\Ck}{\mathsf{C}(\underline{\kappa})}
\dmo{\Int}{Int}
\dmo{\pr}{pr}
\dmo{\lab}{lab}
\dmo{\Sym}{Sym}
\dmo{\Ann}{Ann}
\dmo{\Rad}{Rad}
\dmo{\Ind}{Ind}
\dmo{\Div}{Div}
\dmo{\Res}{Res}
\dmo{\Hur}{Hur}
\dmo{\vcd}{vcd}
\nc{\Span}[1]{\operatorname{Span}(#1)}
\newcommand{\onto}{\twoheadrightarrow}
\renewcommand{\epsilon}{\varepsilon}
\renewcommand{\tilde}{\widetilde}
\renewcommand{\le}{\leqslant}
\nc{\coloneq}{\mathrel{\mathop:}\mkern-1.2mu=}
\nc{\margin}[1]{\marginpar{\scriptsize #1}}
\nc{\para}[1]{\medskip\noindent\textbf{#1.}}
\definecolor{myblue}{RGB}{102,153, 255}
\definecolor{myred}{RGB}{204,0,0}
\definecolor{mygreen}{RGB}{0,204,0}
\definecolor{myorange}{RGB}{255,102,0}
\definecolor{mypurple}{RGB}{138,43,226}
\nc{\red}[1]{\textcolor{myred}{#1}}
\nc{\blue}[1]{\textcolor{myblue}{#1}}
\newcommand{\boldit}[1]{\textit{\textbf{#1}}}
\title[Connected components of the topological surgery graph]{Connected components of the topological surgery graph of a unicellular collection}
\author{Nick Salter}
\author{Abdoul Karim Sane}
\date{August 11, 2023}
\begin{document}
\maketitle
\begin{abstract}
    A unicellular collection on a surface is a collection of curves whose complement is a single disk. There is a natural surgery operation on unicellular collections, endowing the set of such with a graph structure where the edge relation is given by surgery. Here we determine the connected components of this graph, showing that they are enumerated by a certain homological ``surgery invariant''. Our approach is group-theoretic and proceeds by understanding the action of the mapping class group on unicellular collections. In the course of our arguments, we determine simple generating sets for the stabilizer in the mapping class group of a mod-$2$ homology class, which may be of independent interest. 
\end{abstract}

\section{Introduction}
Let $\Sigma_g$ be the closed orientable surface of genus $g$. A \boldit{unicellular collection} on $\Sigma_g$ is the isotopy class of a collection $\Gamma$ of (not necessarily simple) closed curves in minimal position such that $\Sigma_g\setminus\Gamma$ is a single disk. Unicellular collections feature in Harer--Zagier's computation of the Euler characteristic of the moduli space of curves \cite{har}, and more generally form a point of interaction between surface topology and combinatorics.

In \cite{karim}, the second-named author introduced an operation on unicellular collections called {\em surgery} (see \Cref{bground}), that transforms a unicellular collection into a new one. This leads to a graph $\widetilde{\mathcal K}_{d_4,g}$ called the  \textit{topological surgery graph} whose vertices are unicellular collections on $\Sigma_g$ and whose edges are given by surgeries; the subscript $d_4$ records the fact that each vertex of a unicellular collection is $4$-valent when considered as a graph embedded in $\Sigma_g$. The mapping class group $\Mod(\Sigma_g)$ acts on $\widetilde{\mathcal K}_{d_4,g}$ and the quotient graph, denoted $\mathcal K_{d_4,g}$, is called the \textit{combinatorial surgery graph}; its vertices are called \textit{unicellular maps} and by definition are $\Mod(\Sigma_g)$-orbits of unicellular collections.

Given a unicellular collection $\Gamma$, its mod-$2$ homology class $[\gamma]$ turns out to be a surgery invariant. Consequently, the graph $\widetilde{\mathcal K}_{d_4,g}$ is disconnected for all $g\ge1$. For a fixed unicellular collection $\Gamma$, we denote by $\Mod(\Sigma_g)[\Gamma]$ the \boldit{stabilizer subgroup} of mapping classes that fix the connected component of $\Gamma$. Let $\Mod(\Sigma_g)[\gamma]$ denote the stabilizer of the surgery invariant $[\gamma]\in H_1(\Sigma_g,\mathbb{Z}/2\mathbb{Z})$ of $\Gamma$. Our main theorem is:   

\begin{maintheorem}\label{maintheorem}
For every $g \ge 3$, the stabilizer subgroup $\Mod(\Sigma_g)[\Gamma]$ associated to the connected component of $\widetilde{\mathcal K}_{d_4,g}$ containing $\Gamma$ is the group $\Mod(\Sigma_g)[\gamma]$, where $[\gamma] \in H_1(\Sigma_g; \Z/2\Z)$ is the associated surgery invariant. Consequently, unicellular collections $\Gamma$ and $\Gamma'$ are related by a sequence of surgeries if and only if there is an equality $[\gamma] = [\gamma'] \in H_1(\Sigma_g;\Z/2\Z)$ of surgery invariants.
\end{maintheorem}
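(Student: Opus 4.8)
The plan is to prove the asserted equality of stabilizers by two inclusions, and then deduce the surgery‑equivalence criterion from it. The inclusion $\Mod(\Sigma_g)[\Gamma]\subseteq\Mod(\Sigma_g)[\gamma]$ is immediate from surgery‑invariance of the homology class: if $f$ fixes the component $C$ of $\Gamma$, then $f(\Gamma)\in C$, so the surgery invariant of $f(\Gamma)$, namely $f_*[\gamma]$, agrees with that of $\Gamma$, namely $[\gamma]$; hence $f\in\Mod(\Sigma_g)[\gamma]$. (When $[\gamma]=0$ the target group is all of $\Mod(\Sigma_g)$, so there the content of the theorem is that \emph{every} mapping class preserves the surgery component of $\Gamma$.)

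For the reverse inclusion — the heart of the argument — I would fix a generating set $S$ of $\Mod(\Sigma_g)[\gamma]$ of the explicit ``simple'' form produced earlier in the paper (for $[\gamma]\neq0$: the twist $T_{c_0}$ about a fixed nonseparating representative $c_0$ of $[\gamma]$ together with twists and bounding‑pair maps adapted to the complement of $c_0$; for $[\gamma]=0$: a standard finite Dehn‑twist generating set of $\Mod(\Sigma_g)$). Since $\Mod(\Sigma_g)[\Gamma]$ is the setwise stabilizer of the component $C$, and a graph automorphism fixes $C$ as soon as it maps one vertex of $C$ into $C$, it suffices to show, for each $s\in S$ \emph{separately}, that $s$ carries some unicellular collection of $C$ back into $C$. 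This gives the freedom to first travel within $C$ by surgeries to a collection $\Gamma_s$ in a position well‑adapted to $s$ — say, when $s=T_c^{\pm1}$, with the twisting curve $c$ meeting $\Gamma_s$ minimally or isotopic into the graph $\Gamma_s$ (the bounding‑pair generators are handled similarly via their two curves) — and then to exhibit $s(\Gamma_s)$ as the outcome of a finite sequence of surgeries applied to $\Gamma_s$. The crux is precisely this combinatorial realization: I would argue that each elementary surgery performs a ``partial twist'' of the strands of $\Gamma_s$ crossing $c$, and that composing these around $c$ builds up $T_c$, with the intermediate collections automatically remaining in $C$ because $[\gamma]$ is a surgery invariant. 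I expect this bridge — ``Dehn twist $=$ product of surgeries in good position'' — to be the main obstacle: the generating‑set computation is available off the shelf and the homological bookkeeping is free, but the realization step requires genuine local case analysis, and it is these inputs (notably the finite generation of the Torelli group) that force $g\ge3$. Arranging the convenient positions $\Gamma_s$ inside a single surgery component is where I would lean on the combinatorial analysis of $\widetilde{\mathcal K}_{d_4,g}$ and $\mathcal K_{d_4,g}$ from \cite{karim}.

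Granting $\Mod(\Sigma_g)[\Gamma]=\Mod(\Sigma_g)[\gamma]$, the surgery‑equivalence criterion follows. One direction is surgery‑invariance of $[\gamma]$. For the converse, suppose $[\gamma]=[\gamma']$. By the structure of the combinatorial surgery graph $\mathcal K_{d_4,g}$ from \cite{karim} (whose components are detected only by whether the invariant vanishes), the unicellular maps underlying $\Gamma$ and $\Gamma'$ lie in one component of $\mathcal K_{d_4,g}$; lifting an edge‑path between them produces a sequence of surgeries from $\Gamma$ to $g(\Gamma')$ for some $g\in\Mod(\Sigma_g)$. Then the surgery invariant of $g(\Gamma')$ is $g_*[\gamma']=[\gamma]=[\gamma']$, so $g\in\Mod(\Sigma_g)[\gamma']=\Mod(\Sigma_g)[\Gamma']$ by the first part applied to $\Gamma'$; hence $g(\Gamma')$ is surgery‑equivalent to $\Gamma'$, and therefore $\Gamma\sim\Gamma'$. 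Equivalently, combining the equality of stabilizers with the structure of $\mathcal K_{d_4,g}$ shows that every unicellular collection with invariant $[\gamma]$ is surgery‑equivalent to one fixed model $\Gamma_{[\gamma]}$, from which the criterion reads off at once.
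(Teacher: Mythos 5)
Your skeleton is the paper's: the easy inclusion $\Mod(\Sigma_g)[\Gamma]\le\Mod(\Sigma_g)[\gamma]$ from surgery-invariance, the reduction of the hard inclusion to checking that each member of a generating set of $\Mod(\Sigma_g)[\gamma]$ preserves the surgery component (via \Cref{lemma:conjequal}), and the final deduction of the equivalence criterion from connectivity of $\mathcal K_{d_4,g}$ (\Cref{theorem:previous}) are all correct and essentially identical to what the paper does. But the two load-bearing steps are asserted rather than proved, and one of them is mischaracterized. First, the generating set is \emph{not} ``available off the shelf'': the paper has to prove \Cref{theorem:genset} precisely because the known generating sets for $\Mod(\Sigma_g)[x]$ (e.g.\ from \cite{Dey}) are not adapted to the realization step. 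What is needed is a generating set consisting only of Dehn twists along curves disjoint from a representative $\xi$ of $x$ (the Humphries configuration on $S_\xi$) plus the single square $T_\eta^2$ with $i(\xi,\eta)=1$; your proposed list, which includes bounding-pair maps, is not one you could realize by surgeries without further work (the paper only uses a bounding pair internally, in \Cref{lemma:nextxi}, and there it is expressed via the chain relation in terms of twists already known to lie in $\mathcal G$). Second, the bridge ``Dehn twist $=$ product of surgeries in good position'' is exactly \Cref{lemma:twists}, \Cref{cor:visibletwist} and \Cref{prop:genexhibit}: one must show that a twist $T_\beta$ is admissible when $\beta$ meets $\Gamma$ in two points on \emph{intertwined} edges (the identity $T_{\beta_2}^2T_{\beta_1}T_{\beta_3}^2=$ composite of two surgeries inside a pair of pants), and then verify intertwining for each curve in the chosen configuration on a concrete model $\Gamma_0$ — including the one curve ($\beta_{2g-1}$) that is \emph{not} visible on $\Gamma_0$ and requires first performing a surgery to $\Gamma_1$. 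You correctly flag this as the crux and the main obstacle, but you supply no argument for it, so the proof is incomplete at its essential point.

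Two smaller corrections. The surgery invariant of a unicellular collection is never zero (the components of $\widetilde{\mathcal K}_{d_4,g}$ are indexed by $H_1(\Sigma_g;\Z/2\Z)\setminus\{0\}$), so your parenthetical about the case $[\gamma]=0$ is vacuous. And the hypothesis $g\ge3$ does not enter through finite generation of the Torelli group; it is used in Step 2 of the proof of \Cref{theorem:genset}, via Putman's connectivity result \cite[Theorem 1.9]{putmantrick} for the graph of disjoint oriented simple closed curves in a fixed integral homology class.
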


\begin{maincor} For every $g\ge1$, the topological surgery graph $\widetilde{K}_{d_4,g}$ has $2^{2g}-1$ connected components, indexed by the set $H_1(\Sigma_g;\Z/2\Z)\setminus \{0\}$ of surgery invariants. 
\end{maincor}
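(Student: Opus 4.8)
The plan is to show that the surgery invariant sets up a bijection between the set $\pi_0(\widetilde{\mathcal{K}}_{d_4,g})$ of connected components and the set of \emph{nonzero} classes in $H_1(\Sigma_g;\Z/2\Z)$, and then to count. Since $[\gamma]$ is a surgery invariant it is constant on each connected component, so the assignment $\Gamma \mapsto [\gamma]$ descends to a well-defined map
\[
  \Phi\colon \pi_0\bigl(\widetilde{\mathcal{K}}_{d_4,g}\bigr) \longrightarrow H_1(\Sigma_g;\Z/2\Z),
\]
and the second sentence of \Cref{maintheorem} says precisely that $\Phi$ is \emph{injective} for $g \ge 3$: two collections with the same invariant are related by a sequence of surgeries, hence lie in the same component.

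Next I would identify the image of $\Phi$ with $H_1(\Sigma_g;\Z/2\Z) \setminus \{0\}$. It is nonempty, since the standard $4g$-gon identification produces a unicellular collection on $\Sigma_g$ for every $g \ge 1$, and it is invariant under the action of $\Mod(\Sigma_g)$ on $H_1(\Sigma_g;\Z/2\Z)$, because $f \cdot \Gamma$ has surgery invariant $f_*[\gamma]$. Since $\Mod(\Sigma_g) \to \mathrm{Sp}(2g,\Z/2\Z)$ is surjective and the symplectic group acts transitively on the nonzero vectors of $(\Z/2\Z)^{2g}$, it therefore suffices to check that $[\gamma] \ne 0$ for one (equivalently, every) unicellular collection. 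This holds because $\Gamma$ fills: equipping $\Sigma_g$ with the CW structure whose $1$-skeleton is the $4$-valent graph underlying $\Gamma$ and whose unique $2$-cell is $D = \Sigma_g \setminus \Gamma$, one sees that the attaching map of $D$ traverses each edge exactly twice, so the mod-$2$ boundary map $\partial_2$ vanishes and $H_1(\Sigma_g;\Z/2\Z) = \ker \partial_1$. Under this identification $[\gamma]$ is represented by the $1$-cycle equal to the sum of \emph{all} edges of $\Gamma$ (each edge belongs to a single curve of the collection and is traversed once by it), which is a nonzero chain because $\Gamma$ has $4g-2 \ge 2$ edges. Hence $\Phi$ surjects onto $H_1(\Sigma_g;\Z/2\Z) \setminus \{0\}$, so for $g \ge 3$ it is a bijection and $\widetilde{\mathcal{K}}_{d_4,g}$ has exactly $|H_1(\Sigma_g;\Z/2\Z) \setminus \{0\}| = 2^{2g} - 1$ components.

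Finally, the cases $g = 1, 2$ lie outside the range of \Cref{maintheorem} and must be treated separately. Here the combinatorial surgery graph $\mathcal{K}_{d_4,g}$ has only finitely many vertices, so the injectivity of $\Phi$ --- that any two unicellular collections with equal surgery invariant are surgery-equivalent --- can be checked by direct enumeration, or extracted from \cite{karim}; combined with the well-definedness and surjectivity of $\Phi$ established above, this yields the count $2^{2g}-1$ in these cases as well. The substantive input is \Cref{maintheorem} itself; the only other points needing care are the small-genus verification and pinning down the description of $[\gamma]$ as the all-edges $1$-cycle, both of which are routine.
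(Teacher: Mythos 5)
Your argument is essentially the intended one: the paper states the corollary without a written proof, treating it as an immediate consequence of \Cref{maintheorem} together with the count $\lvert H_1(\Sigma_g;\Z/2\Z)\setminus\{0\}\rvert = 2^{2g}-1$, and the details you supply --- well-definedness of $\Phi$, nonvanishing of the invariant via the CW structure with vanishing mod-$2$ boundary map $\partial_2$, and surjectivity via the transitive action of $\mathrm{Sp}(2g,\Z/2\Z)$ on nonzero vectors --- are all correct and worth making explicit. Two small points. First, the standard $4g$-gon identification yields a unicellular graph with a \emph{single} vertex of valence $4g$, which is a $4$-valent unicellular collection only when $g=1$; for nonemptiness you should instead invoke the chain of $2g$ simple closed curves (the collection $\Gamma_0$ of \Cref{origine}), which the paper notes is unicellular for every $g\ge1$. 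Second, you are right that $g=1,2$ fall outside the range of \Cref{maintheorem} even though the corollary is asserted for all $g\ge1$; the paper is silent on this, so your explicit deferral of those cases to a finite verification or to \cite{karim} is, if anything, more careful than the source, though as written it remains an appeal rather than a proof for small genus.
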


Unicellular collections are instances of a more general notion of \boldit{unicellular graphs}, which are by definition (no longer necessarily $4$-valent) graphs embedded in $\Sigma_g$ with complement a single disk. In this broader setting, the surgery operation is still sensible, and the surgery invariant still exists for any unicellular graph all of whose vertices have even degree, but it is not known whether the connected components are still enumerated by this single mod-$2$ invariant.

The proof of \Cref{maintheorem} goes through the exhibition of an explicit generating set of the stabilizer subgroup of a mod-$2$ homology class $[\gamma]$. Generating sets for these subgroups of $\Mod(\Sigma_g)$ were determined in \cite{Dey} as part of a broader program to understand {\em liftable mapping class groups}, i.e. the set of mapping classes on $\Sigma_g$ that {\em lift} along some covering map $\Sigma_h \to \Sigma_g$. The generating sets we find here are simpler than those of \cite{Dey} (ours consist of fewer generators and only Dehn twists and their squares) and in some sense are the simplest possible. We imagine that this result may be of independent interest; we record it below. For a simple closed curve $\xi \subset\Sigma_g$, the subgroup $\Mod(\Sigma_g,\xi)$ is the group of mapping classes preserving $\xi$ as an {\em oriented} isotopy class of curve; it is a quotient of the mapping class group of the surface $\Sigma_{g-1,2}$ obtained by cutting along $\xi$ and hence is generated by the image of any generating set for $\Mod(\Sigma_{g-1,2})$, e.g. the Humphries generators (cf. \cite[Figure 4.10]{farmar}).

\para{\Cref{theorem:genset}} \textit{Let $g \ge 3$ be given, and let $x \in H_1(\Sigma_g; \Z/2\Z)$ be nonzero. Then $\Mod(\Sigma_g)[x]$ is generated by $T_\eta^2$ and $\Mod(\Sigma_g,\xi)$, where $\xi$ is any simple closed curve satisfying $[\xi] = x \in H_1(\Sigma_g; \Z/2\Z)$, and $\eta$ is any simple closed curve with geometric intersection $i(\xi, \eta) = 1$.}\\

To prove \Cref{maintheorem}, we combine \Cref{theorem:genset} with a set of techniques for expressing certain simple surgeries as Dehn twists. We show that there are enough of these to express each of the generators for $\Mod(\Sigma_g)[\Gamma]$ required by \Cref{theorem:genset} as surgeries.\\

Unicellular graphs/maps appear in various places throughout geometry and combinatorics. For instance, when $\Sigma_g$ is endowed with a hyperbolic metric, unicellular graphs appear as the cut locus of the exponential map. This fact is used to build a cell decomposition of the Teichmüller space $\mathcal{T}_{g,1}$ and ultimately the moduli space $\mathcal{M}_g^1$ of Riemann surfaces of genus $g$ equipped with one marked point $p$. In a more combinatorial direction, unicellular maps have been studied by Walsh--Lehman to provide the counting formulas of \cite{leh, leh2}. The cell decomposition of $\mathcal{M}_g^1$ and counting formulas for unicellular maps led J. Harer and D. Zagier \cite{har} to compute the Euler characteristic of the moduli space $\mathcal{M}_g^1$.   

$\Mod(\Sigma_g)$ acts on a wide variety of discrete sets, e.g. the set of isotopy classes of curves, $H_1(\Sigma_g;\Z)$, the set of ``$r$-spin structures'' studied in \cite{nicktoric,strata2}), and the set of unicellular collections as studied in this paper. The results of this article fit into the general theme of studying the stabilizer subgroups of such actions. A virtue of the results obtained in \Cref{theorem:genset} is that they are {\em coordinate-free} in the sense that the generators we obtain for $\Mod(\Sigma_g)[\Gamma] = \Mod(\Sigma_g)[x]$ do not make reference to any particular configuration of curves or depiction of the surface, and are thus can be exhibited in specific examples without lengthy picture calculations. Coordinate-free generating sets for other types of stabilizer subgroups have been obtained over the past few years for the mapping class group itself \cite{nicklei} as well as for $r$-spin mapping class groups and the closely-related {\em framed} mapping class groups \cite{strata3}.

\para{Outline} In \Cref{bground}, we recall some basic facts about unicellular graphs and the surgery operation. In \Cref{section:stabilizer}, we define the stabilizer subgroup associated to a connected component of $\tilde{\mathcal K}_{d_4,g}$ and exhibit certain Dehn twists as the result of performing surgeries. Finally in \Cref{section:genset}, we prove \Cref{theorem:genset} and use this and the results of \Cref{section:stabilizer} to prove \Cref{maintheorem}.

\para{Acknowledgements} This project was begun at the 2022 AMS Southeast Spring Sectional meeting in the special session on geometric group theory. The authors would like to thank the organizers of this session, in particular Dan Margalit, for facilitating an introduction and for some valuable comments on a draft. The first-named author is supported in part by NSF grant no. DMS-2153879. The second-named author thanks the Department of Mathematics at Georgia Tech and the Department of Mathematics at the University of Dakar for support and hospitality during the period this paper was prepared.

\section{Background on unicellular graphs and unicellular maps}\label{bground}
We recall that $\Sigma_g$ denotes a closed oriented surface of genus $g$ and $\Mod(\Sigma_g)$ is the mapping class group of $\Sigma_g$. A \boldit{unicellular graph} on $\Sigma_g$ is (the isotopy class of) a graph $G=(V,E)$ embedded on $\Sigma_g$ whose complement is a single disk. The \boldit{degree partition} of a unicellular graph is the finite set $d:=\{d_1,\dots,d_n\}$ of positive integers where $d_i$ records the valence of the corresponding vertex $v_i$. Using the Euler characteristic of $\Sigma_g$, we obtain the following relation:
\[
2|E|=\displaystyle{\sum_id_i=2|V| + 4g-2}.
\]
When $G$ is a regular 4-valent unicellular graph we speak of a \boldit{unicellular collection}, since it can be seen as a collection of closed curves with only double points; in this case we denote a unicellular collection by $\Gamma$. For unicellular collections, the relation above reduces to the conditions 
\[
|V|=2g-1 \qquad \mbox{and} \qquad E=4g-2.  
\]
 A \boldit{unicellular map} is the $\Mod(\Sigma_g)$-orbit of a unicellular graph; that is, we consider unicellular graphs up to orientation preserving homeomorphisms. Unicellular maps are combinatorial objects and can be described in many ways. One of them is to associate a \boldit{coding} to a unicellular map. Since $\Sigma_g-G$ is an oriented\footnote{the orientation inherited by the one on $\Sigma_g$} polygon, the sides of $\Sigma_g-G$ can be labelled following the orientation in such way that the two sides coming from the same edge of $G$ have the same letter with a bar  on the second one (for example $x$ and $\bar{x}$). Doing so, we get a word $W_G$ that we refer to as the {\em coding} of the unicellular map $G$. Two codings are considered equivalent if one can be obtained from the other by relabeling and cyclic permutation of the letters. In the coding of a unicellular map $G$, a letter $x$ refer to an oriented edge of $G$ and we use the notation $(x,\bar{x})$ to denote the corresponding non oriented edge. 

 \begin{example} The following is a unicellular collection $\Gamma$ on a genus $2$ surface. On the right, we have the complement $\Sigma_2-\Gamma$ where the identified sides have the same color. The coding corresponding to $\Gamma$ is $W_{\Gamma}=a\hspace{0.1cm}b\hspace{0.1cm}c\hspace{0.1cm}d\hspace{0.1cm}\bar{b}\hspace{0.1cm}e\hspace{0.1cm}\bar{c}\hspace{0.1cm}f\hspace{0.1cm}\bar{a}\hspace{0.1cm}\bar{e}\hspace{0.1cm}\bar{f}\hspace{0.1cm}\bar{d}$.  
 \begin{figure}[htbp]
\begin{center}
\includegraphics[scale=0.2]{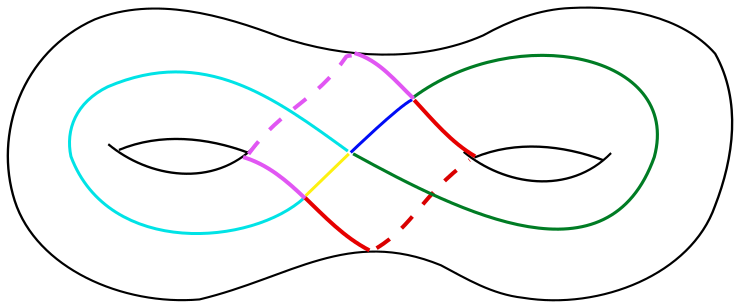} \hspace{2cm}
\includegraphics[scale=0.2]{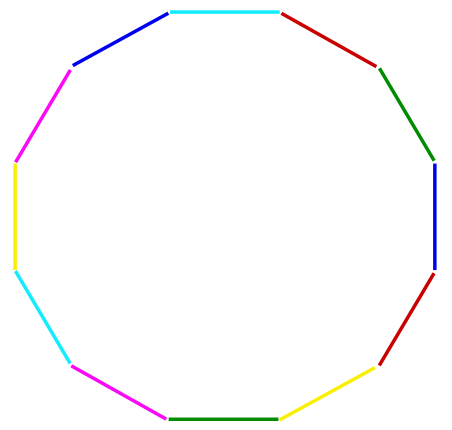}\put(-46,87){\tiny{$a$}}\put(-25,81){\tiny{$b$}}\put(-7,63){\tiny{$c$}}\put(-2,41){\tiny{$d$}}\put(-6,19){\tiny{$\bar{b}$}}\put(-25,3){\tiny{$e$}}\put(-46,-5){\tiny{$\bar{c}$}}\put(-72,2){\tiny{$f$}}\put(-90,19){\tiny{$\bar{a}$}}\put(-94,41){\tiny{$\bar{e}$}}\put(-88,63){\tiny{$\bar{f}$}}\put(-72,81){\tiny{$\bar{d}$}}
\label{default}
\caption{A unicellular map and its complement.}
\end{center}
\end{figure}
 \end{example}

The following result asserts that the coding is a complete invariant of the unicellular map. This is ultimately a consequence of the classification of surfaces; see \cite[Proposition 2.1]{karim} for a proof.
\begin{proposition}
Two unicellular graphs on $\Sigma_g$ are in the same  $\Mod(\Sigma_g)$ orbit if and only they have the same coding (up to relabeling and cyclic permutation). 
\end{proposition}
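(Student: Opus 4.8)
The plan is to prove the two implications separately; the forward implication (equivalent graphs have equal codings) is essentially a tautology once one unwinds definitions, while the reverse implication is the substantive one and amounts to reconstructing the pair $(\Sigma_g, G)$, up to orientation-preserving homeomorphism, from the combinatorial data of the coding.

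For the forward direction, suppose $\phi \in \Homeo^+(\Sigma_g)$ satisfies $\phi(G) = G'$. Then $\phi$ carries the complementary disk $D = \Sigma_g \setminus G$ homeomorphically onto $D' = \Sigma_g \setminus G'$, and since $\phi$ is orientation-preserving it respects the boundary orientations of these disks. The coding $W_G$ is obtained by traversing $\partial D$ in the positive direction and recording, for each arc, the labelled (barred or unbarred) oriented edge of $G$ onto which it maps; applying $\phi$ shows that after renaming each edge $e$ of $G$ by the edge $\phi(e)$ of $G'$ and choosing the cyclic starting point appropriately, one recovers $W_{G'}$. Hence $W_G$ and $W_{G'}$ agree up to relabelling and cyclic permutation. (There is no reflection, precisely because $\phi$ preserves orientation.)

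For the converse, we recover $(\Sigma_g,G)$ from a coding. Let $W$ be a cyclic word in which each of the symbols $x_1,\bar x_1,\dots,x_m,\bar x_m$ occurs exactly once, where $m = |E|$. Take a convex polygon $P$ with $2m$ sides, label them in positive cyclic order by the letters of $W$, and glue each side labelled $x_i$ to the side labelled $\bar x_i$ by the orientation-reversing affine identification dictated by the coding. The quotient $P/\!\sim$ is a closed surface; orientability together with an Euler-characteristic count (its vertices, the $2m$ sides identified in pairs, and a single face) identify it, by the classification of surfaces, with $\Sigma_g$ up to orientation-preserving homeomorphism, and the image $G_W$ of $\partial P$ is an embedded graph whose complement is the single open $2$-cell $\Int(P)$ and whose coding is $W$ by construction. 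Now suppose $G,G'$ are unicellular graphs on $\Sigma_g$ both with coding $W$ (after relabelling and cyclic permutation; fix these relabellings). Cutting $\Sigma_g$ along $G$ produces exactly this labelled polygon with its prescribed side identifications, which yields an orientation-preserving homeomorphism $\Psi\colon P/\!\sim\ \to \Sigma_g$ with $\Psi(G_W) = G$; likewise there is $\Psi'\colon P/\!\sim\ \to \Sigma_g$ with $\Psi'(G_W) = G'$. Then $\Psi' \circ \Psi^{-1} \in \Homeo^+(\Sigma_g)$ carries $G$ to $G'$, so $G$ and $G'$ lie in the same $\Mod(\Sigma_g)$-orbit.

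The step I expect to be the crux is the assertion that cutting $\Sigma_g$ along $G$ returns \emph{precisely} the polygon $P$ with the side identifications read off from $W$: one must check that the complementary face is a disk (this is the unicellularity hypothesis), that the gluing maps on the sides may be normalized to the standard affine ones (true after an orientation-preserving self-homeomorphism of $P$, since a homeomorphism of an arc is unique up to isotopy rel endpoints), and — most importantly — that the induced identifications on $\partial P$, in particular the partition of its $2m$ corner vertices into the vertex set of $G$, are determined by the cyclic word $W$ alone. It is exactly here that the classification of surfaces is invoked, guaranteeing that no data beyond $W$ survives; this is also why the statement concerns orientation-preserving homeomorphisms rather than, say, diffeomorphisms up to isotopy.
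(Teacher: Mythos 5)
The paper does not give its own proof of this proposition---it cites \cite[Proposition 2.1]{karim} and remarks only that the statement is ``ultimately a consequence of the classification of surfaces''---and your cut-and-reglue argument is precisely that standard proof. Both directions are handled correctly, including the orientation bookkeeping for $\Psi'\circ\Psi^{-1}$ and the key observation that the corner (vertex) identifications of the polygon are determined combinatorially by the cyclic word $W$ alone.
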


 In \cite{karim}, the second author introduces an operation called \boldit{surgery} on unicellular graphs. Given a unicellular graph $G$ on $\Sigma_g$ and two oriented edges $x$ and $y$ of $G$, there exists a unique  arc $\lambda_{x,y}$ (up to relative homotopy with endpoints gliding in $x$ and $y$) from the right-hand side of $x$ to the right-hand side of $y$. We get a new graph $\sigma_{x,y}(G)$ by cutting and gluing $x$ and $y$ along $\lambda_{x,y}$; note that $\sigma_{x,y}(G)$ may no longer be unicellular.
 
 \begin{figure}[htbp]
\begin{center}
\includegraphics[scale=0.45]{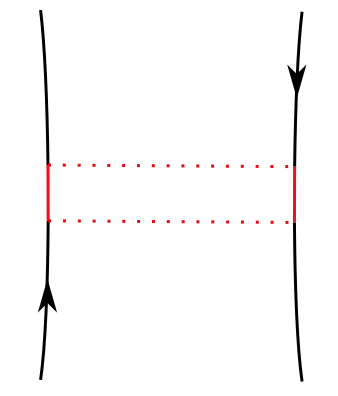}\hspace{3cm}
\includegraphics[scale=0.45]{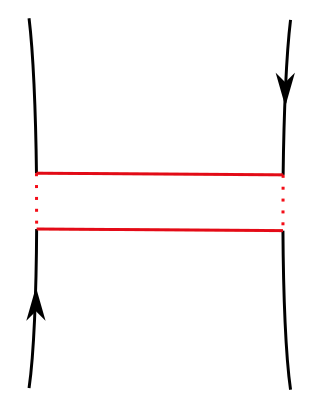}
\put(-125,40){\Large{$\longrightarrow$}}
\caption{A 0-surgery obtained by cutting out $S^0\times D^1$ (the two vertical components) from $x$ and $y$, and gluing in $\mathbb{D}^1\times \mathbb{S}^0$ (the two horizontal components). The two vertical components of $S^0\times D^1$ may belong to the same (unoriented) edge $(x, \bar{x})$; in this case, the endpoints of $\lambda_{x,\bar{x}}$ are required to be disjoint.} 
\label{0surg}
\end{center}
\end{figure}

 When $\sigma_{x,y}(G)$ is still a unicellular graph, we call the operation a \boldit{surgery} on $G$ along $x$ and $y$.
 
 In \cite{karim} we gave a necessary and sufficient condition on oriented edges along which surgeries can be applied.  
 \begin{definition} Let $G$ be a unicellular graph on $\Sigma_g$ and $W_G$ the coding of the unicellular map defined by $G$. Let $x$ and $y$ be two oriented edges. We say that $x$ and $y$ are \boldit{intertwined} if they appear in the coding as follows:
 \[
 W_G=w_1\boldsymbol{x}w_2\boldsymbol{\bar{x}}w_3\boldsymbol{y}w_4\boldsymbol{\bar{y}}.
 \] 
 This is equivalent to the topological condition of the arcs $\lambda_{x,y}$ and $\lambda_{\bar{x},\bar{y}}$ intersecting once on $\Sigma_g$. 
 \end{definition}  
 
 The following gives a complete characterization of when surgery can be performed: 
 \begin{lemma}[A. K. Sane \cite{karim}]
 Let $G$ be a unicellular graph on $\Sigma_g$ and $x$ and $y$ be two oriented edges. Then, a surgery operation $\sigma_{x,y}(G)$ along $x$ and $y$ exists if and only if $x$ and $y$ are intertwined. Moreover, if $w_1\boldsymbol{x}w_2\boldsymbol{\bar{x}}w_3\boldsymbol{y}w_4\boldsymbol{\bar{y}}$ is the coding associated to $G$, then $w_3\boldsymbol{x}w_2\boldsymbol{\bar{x}}w_1\boldsymbol{y}w_4\boldsymbol{\bar{y}}$ is the one associated to $\sigma_{x,y}(G)$. 
 \end{lemma}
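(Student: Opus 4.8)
The plan is to pass to the polygonal model, realize the surgery explicitly there, and read off both assertions by bookkeeping the boundary walk. Cutting $\Sigma_g$ along $G$ produces a polygon $P$ whose boundary, read with the inherited orientation, is the cyclic word $W_G$; the side of $P$ carrying the letter $x$ (resp.\ $\bar x$) is one of the two sides of the edge $(x,\bar x)$. Under this dictionary $\lambda_{x,y}$ becomes a chord of $P$ joining an interior point of one of the two sides of $(x,\bar x)$ to an interior point of one of the two sides of $(y,\bar y)$, and --- once the orientation conventions are fixed --- $\lambda_{\bar x,\bar y}$ becomes the chord joining the two complementary sides. Two chords of a disk cross once exactly when their four endpoints alternate along the boundary; reading this around $\partial P = W_G$ recovers the equivalence noted just after the definition of ``intertwined'': $x$ and $y$ are intertwined, so that $W_G = w_1 x w_2\bar x w_3 y w_4\bar y$, if and only if $\lambda_{x,y}$ and $\lambda_{\bar x,\bar y}$ meet once; in particular $(x,\bar x)\neq(y,\bar y)$ whenever $x,y$ are intertwined.

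The substance is to identify $\Sigma_g\setminus\sigma_{x,y}(G)$ in terms of $P$. The surgery is supported in a thin disk $N$ --- the square of \Cref{0surg} --- meeting $G$ in exactly the two edge-arcs it operates on, so $G':=\sigma_{x,y}(G)$ agrees with $G$ away from $N$. I would organize the computation by adjoining the chord itself: put $\widehat G := G\cup\lambda_{x,y}$, so that $\Sigma_g\setminus\widehat G = P_1\sqcup P_2$ is the pair of disks into which $\lambda_{x,y}$ cuts $P$, with (in the intertwined case) $P_1$ carrying the syllables $w_2,w_3$ of $W_G$ along its boundary and $P_2$ carrying $w_4,w_1$. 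Inspecting \Cref{0surg} shows that $G'$ is obtained from $\widehat G$ by contracting $\lambda_{x,y}$ --- turning the ``H'' that $\widehat G$ forms inside $N$ into an ``X'' at a new $4$-valent vertex $v$ --- and then smoothing $v$ the way that does \emph{not} reconstitute $G$. Tracing the two boundary cycles of $\widehat G$ through this smoothing then computes the complementary regions of $G'$.

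In the intertwined case I expect the smoothing to fuse $P_1$ and $P_2$ into a single disk whose boundary walk spells $W_G$ with the syllables $w_1$ and $w_3$ interchanged, i.e.\ $w_3 x w_2\bar x w_1 y w_4\bar y$; by the Proposition above (codings are complete $\Mod(\Sigma_g)$-invariants) this is then the coding of $\sigma_{x,y}(G)$, which is in particular unicellular. When $x,y$ are not intertwined, $\lambda_{x,y}$ and $\lambda_{\bar x,\bar y}$ are disjoint and the same smoothing instead leaves $\Sigma_g\setminus G'$ disconnected; since $\chi(\Sigma_g\setminus G') = \chi(\Sigma_g\setminus G) = 1$ --- the surgery changes neither the number of vertices nor the number of edges of the graph --- a connected complement would necessarily be a disk, so disconnectedness of the complement is precisely the failure of unicellularity. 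The borderline case in which the two operated arcs lie on a single edge (so $y=\bar x$) falls under ``not intertwined'', and there one checks by hand that neither way of reconnecting the severed edge produces a connected complement.

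The step I expect to be the real obstacle is this local analysis of \Cref{0surg}: one must pin down all the orientation data --- the meaning of ``right-hand side'', the orientation-reversing flip identifying the paired sides of $P$, and the rotation system at the four edge-ends inside $N$ --- carefully enough that the smoothing of $v$ translates into the clean transposition $w_1\leftrightarrow w_3$ (and not that transposition composed with a reversal, nor a modification of $w_2,w_4$), and so that the disconnectedness in the non-intertwined case comes out exactly as claimed. Once this is settled, the case analysis and the computation of the new coding are routine.
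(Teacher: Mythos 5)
This lemma is quoted from \cite{karim}; the paper itself supplies no proof, so there is no in-house argument to compare against. Your general framework --- cut along $G$ to obtain the polygon $P$ with boundary word $W_G$, realize $\lambda_{x,y}$ as a chord, thicken it to a rectangle, and reglue across the two deleted edge-arcs to identify the complement of $\sigma_{x,y}(G)$ --- is the natural and correct one, and for $x,y$ lying on distinct unoriented edges your outline is sound: the rectangle attaches to the sub-polygon whose boundary carries $\bar x$ at one end and to the one carrying $\bar y$ at the other, so the complement reconnects into a single disk exactly when the four letters alternate, and tracing the new boundary walk yields the transposition $w_1\leftrightarrow w_3$. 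You are also candid that the orientation bookkeeping you defer is the entire content of the argument; as written this is a plan rather than a proof, and the Euler-characteristic observation (connected complement forces a disk) is a nice way to close the ``only if'' direction.

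The genuine error is your disposal of the case $y=\bar x$. You assert that it ``falls under not intertwined'' and that ``neither way of reconnecting the severed edge produces a connected complement.'' This contradicts the paper in a load-bearing place: the third bullet of the Remark following this lemma, \Cref{lemma:twists}.1, and \Cref{degn} all assert that, with the stated endpoint convention, $\sigma_{x,\bar x}(G)$ is \emph{always} a unicellular graph --- indeed it equals $T_\beta^2(G)$ for a curve $\beta$ meeting $G$ once, hence is a homeomorphic image of $G$ --- and this fact is later used to put $T_\alpha^2$ into $\Mod(\Sigma_g)[\Gamma_0]$ in \Cref{prop:genexhibit}. The subtlety you missed is that for $y=\bar x$ the coding pattern $w_1\boldsymbol{x}w_2\boldsymbol{\bar x}w_3\boldsymbol{y}w_4\boldsymbol{\bar y}$ is vacuous (only two letter-occurrences are available, not four), so ``intertwined'' must here be read through its topological characterization: with the convention that $\lambda_{x,\bar x}(0)$ precedes $\lambda_{x,\bar x}(1)$ along $x$, the arcs $\lambda_{x,\bar x}$ and $\lambda_{\bar x,x}$ do cross once, the edges are intertwined, and the surgery exists. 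Your ``check by hand'' therefore came out the wrong way; a complete proof has to treat $y=\bar x$ as a separate case with its own regluing analysis rather than fold it into the non-intertwined bucket, and this is precisely the sort of sign error that the orientation bookkeeping you postponed is meant to prevent.
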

 \begin{figure}[htbp]
\begin{center}
\includegraphics[scale=0.25]{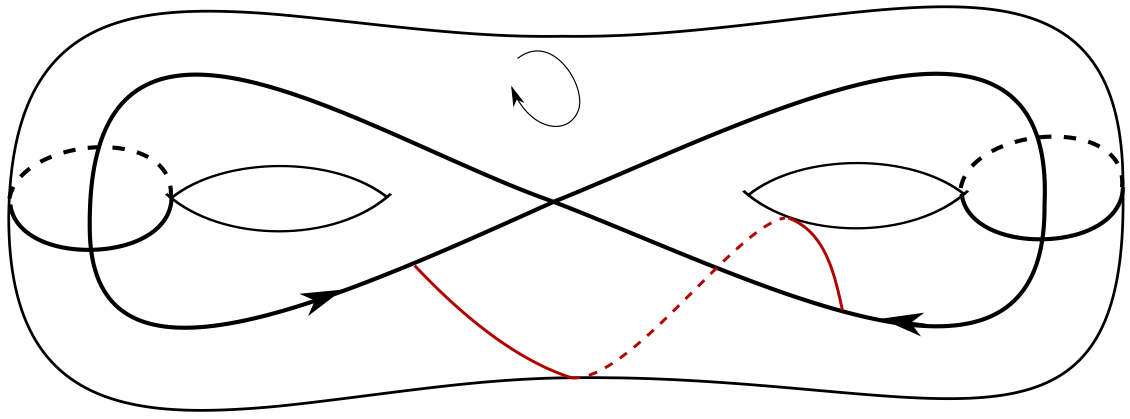}\hspace{1cm}
\includegraphics[scale=0.25]{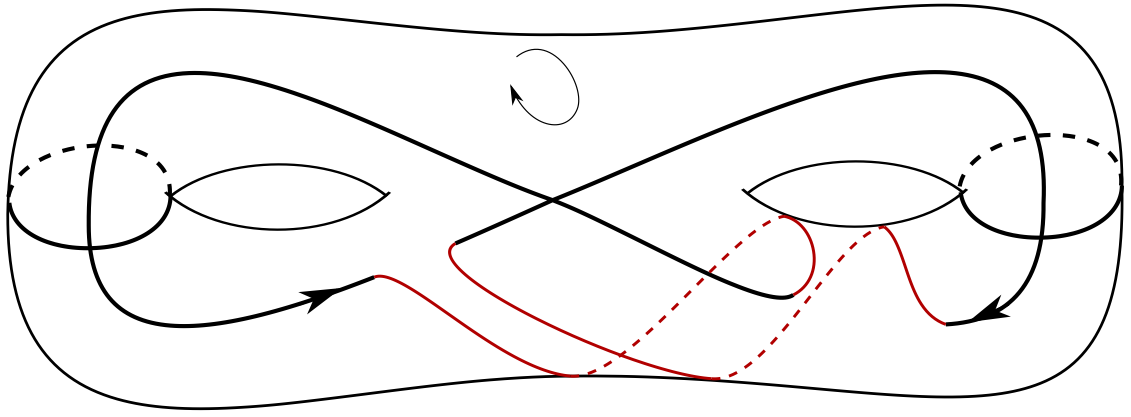}\put(-40,30){$\rightarrow$}
\caption{Example of a surgery on a unicellular collection}
\label{default2}
\end{center}
\end{figure}

  \begin{remark} Let $G$ be a unicellular graph and $x$, $y$ two intertwined edges. 
 \begin{itemize}
 \item Note that surgery is performed on {\em oriented} edges. In other words, $x$ and $y$ can be intertwined but not $x$ and $\bar{y}$.
 \item It is apparent that $x$ and $y$ are intertwined if and only if $\bar{x}$ and $\bar{y}$ are. 
 \item The situation where $y=\bar{x}$ is allowed, subject to the following convention. The arc $\lambda_{x,\bar{x}}$ is chosen so that the endpoint $\lambda_{x,\bar{x}}(0)$ precedes $\lambda_{x,\bar{x}}(1)$ when running along the oriented edge $x$. We will see below in \Cref{lemma:twists}.1 that in this case, $\sigma_{x,\bar{x}}(G)$ is always a unicellular graph and is in the same $\Mod(\Sigma_g)$-orbit as $G$. 
 \end{itemize}
 \end{remark}
 
 Now, we define two natural graphs associated to surgery on unicellular graphs/maps. Let $\widetilde{\mathcal{U}}_{d,g}$ (respectively $\mathcal{U}_{d,g}$) be the set of all unicellular graphs (respectively, unicellular maps) on $\Sigma_g$ with degree partition $d$. The \boldit{topological surgery graph} $\widetilde{\mathcal{K}}_{d,g}$ is the graph whose vertices are elements of $\widetilde{\mathcal{U}}_{d,g}$ with two vertices sharing an edge if there is a surgery taking one to the other. The mapping class group $\Mod(\Sigma_g)$ acts on $\widetilde{\mathcal{K}}_{d,g}$ by simplicial automorphisms and the quotient graph, denoted by $\mathcal{K}_{d,g}$, is called the \boldit{combinatorial surgery graph}. The graph $\mathcal{K}_{d,g}$ has finitely many vertices corresponding to elements in $\mathcal{U}_{d,g}$; edges are again given by surgeries. In \cite{karim,karim2}, the author prove several results on those graphs:

 \begin{theorem}[A. K. Sane \cite{karim,karim2}]\label{theorem:previous} Let $d_3:=(3,\dots,3)$ and $d_4:=(4,\dots,4)$. 
 \begin{enumerate}
 \item The graphs $\mathcal{K}_{d_3,g}$ and $\mathcal{K}_{d_4,g}$ are connected for every $g\geq 1$. Moreover, their diameters are at most quadratic functions of the genus.
 \item For every $g\geq 2$, the graph $\widetilde{\mathcal{K}}_{d_3,g}$ is connected. 
 \item For every $g\geq 2$, the graph $\widetilde{\mathcal{K}}_{d_4,g}$ is disconnected. 
 \end{enumerate}
 \end{theorem}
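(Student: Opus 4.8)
I would prove the three parts separately, since they have rather different characters. Part~(3) is immediate. A unicellular collection $\Gamma$ carries a well-defined mod-$2$ homology class $[\gamma]\in H_1(\Sigma_g;\Z/2\Z)$: all its vertices are even-valent, so $\Gamma$ is a mod-$2$ cycle, and in the one-disk CW structure on $\Sigma_g$ cut out by $\Gamma$ one has $\partial_2\equiv 0$, so $[\gamma]$ is the sum of the distinct edge classes and is in particular nonzero. A surgery alters $\Gamma$ only inside a disk, hence leaves $[\gamma]$ unchanged; thus $[\gamma]$ is a surgery invariant. Since $\Mod(\Sigma_g)$ acts transitively on the set of nonzero classes of $H_1(\Sigma_g;\Z/2\Z)$, which has $2^{2g}-1\ge 3$ elements, there is $\phi\in\Mod(\Sigma_g)$ with $\phi_*[\gamma]\ne[\gamma]$, and then $\Gamma$ and $\phi(\Gamma)$ have distinct surgery invariants, so they lie in distinct components of $\widetilde{\mathcal{K}}_{d_4,g}$.

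Part~(1) is the combinatorial core, and I would carry it out entirely at the level of codings, using the description of surgery from the lemma above (it replaces $w_1\,x\,w_2\,\bar{x}\,w_3\,y\,w_4\,\bar{y}$ by $w_3\,x\,w_2\,\bar{x}\,w_1\,y\,w_4\,\bar{y}$, i.e.\ it swaps the two blocks flanking a chosen letter). The plan has three steps. First, fix for each genus $g$ and each of the regular partitions $d_3$, $d_4$ a distinguished ``normal form'' coding $W_g$, chosen as structured as possible (a chain-like standard gluing). Second, design a complexity statistic $c(W)\ge 0$ that vanishes exactly on $W_g$ and such that from any $W\ne W_g$ some admissible surgery strictly decreases $c$; iterating this reduces an arbitrary coding to $W_g$ and hence proves connectivity of $\mathcal{K}_{d_3,g}$ and $\mathcal{K}_{d_4,g}$. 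Here it is essential that $d_3$ and $d_4$ are \emph{regular}, so that every surgery automatically preserves the degree partition and the argument remains inside $\widetilde{\mathcal{U}}_{d,g}$. Third, verify that $c(W)=O(g)$ and that each unit of $c$ is removed in $O(g)$ surgeries, which yields the stated quadratic bound on the diameter.

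Part~(2) bootstraps from part~(1). Connectivity of the quotient $\mathcal{K}_{d_3,g}$ says precisely that $\Mod(\Sigma_g)$ acts transitively on $\pi_0(\widetilde{\mathcal{K}}_{d_3,g})$, so it suffices to show that this action is trivial; and for that it is enough that each element of some finite Dehn-twist generating set of $\Mod(\Sigma_g)$ fixes the connected component of one fixed trivalent unicellular graph $G_0$. Concretely, for each Humphries generator $T_c$ I would exhibit an explicit finite sequence of surgeries from $G_0$ to $T_c(G_0)$; this is a ``surgery realizes a Dehn twist'' statement of exactly the type recorded in \Cref{lemma:twists} and its variants, and it is the same strategy used for \Cref{maintheorem} in the present paper. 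What makes it go through for $d_3$ but not for $d_4$ is that trivalent vertices have odd degree, so there is no mod-$2$ surgery invariant to obstruct such a sequence.

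The principal obstacle is the second step of part~(1): producing a complexity statistic guaranteed to strictly decrease under \emph{some} available surgery at every non-normal coding. Naive candidates --- word length, the number of adjacent transpositions separating $W$ from $W_g$, and the like --- typically admit surgeries that leave them unchanged or increase them, so the statistic must be adapted to the block-swap nature of the surgery move, and the reduction must be organized so that the total surgery count remains quadratic. The obstacle in part~(2) is comparatively mild: one needs only a short catalogue of explicit surgery-to-twist realizations covering a generating set, which is picture-work of the same kind carried out for \Cref{maintheorem}.
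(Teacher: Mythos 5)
You should first note that the paper does not prove \Cref{theorem:previous} at all: it is imported wholesale from \cite{karim,karim2}, so the only part with an in-paper argument to compare against is part (3), whose mechanism is reproved in \Cref{section:stabilizer}. Your treatment of part (3) is correct and essentially identical to the paper's: the difference $\sigma_{x,y}(\Gamma)-\Gamma$ is the mod-$2$ boundary of the surgery rectangle, so $[\gamma]$ is a surgery invariant; the cellular computation (the single $2$-cell runs over each edge twice, so $\partial_2\equiv 0$ mod $2$ and the nonzero cycle $\sum_i e_i$ represents a nonzero class) shows $[\gamma]\ne 0$; and transitivity of $\Mod(\Sigma_g)$ on $H_1(\Sigma_g;\Z/2\Z)\setminus\{0\}$, which has at least three elements, produces $\phi$ with $[\phi(\Gamma)]\ne[\Gamma]$. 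That part stands on its own.

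For parts (1) and (2), however, what you have written is a plan rather than a proof, and you have correctly located the missing idea yourself. The entire content of part (1) is the construction of a complexity statistic on codings that strictly decreases under \emph{some} admissible (intertwined-pair) surgery at every non-normal-form coding, together with the bookkeeping that yields the quadratic diameter bound; no candidate statistic is proposed, let alone verified, and your own observation that the naive candidates (word metrics, transposition counts) admit surgeries that do not decrease them is precisely why this step cannot be waved through. Part (2) is reduced correctly, via the Putman trick, to connectivity of $\mathcal{K}_{d_3,g}$ plus exhibiting, for each Humphries generator $T_c$, an explicit surgery path from a fixed trivalent $G_0$ to $T_c(G_0)$; but that finite catalogue of picture verifications \emph{is} the proof, and it is not supplied. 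Be aware also that \Cref{lemma:twists} is stated only for unicellular graphs with even-degree vertices, so appealing to it or its ``variants'' in the trivalent setting requires separate arguments; and the absence of a mod-$2$ obstruction for $d_3$ does not by itself yield connectivity. As it stands, parts (1) and (2) remain unproven in your proposal, just as they are unproven (because externally cited) in the paper.
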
 
 The fact that $\widetilde{\mathcal{K}}_{d_4,g}$ is disconnected relies on the construction of an invariant of surgery. We will see in the next section how the connectedness problem is related to a certain subgroup of the mapping class group. 
 
\section{Unicellular collections and their stabilizer group in $\Mod(\Sigma_g)$}\label{section:stabilizer}

Let $G$ be a unicellular graph on $\Sigma$ with degree partition $d$. 

\begin{definition} The \boldit{stabilizer group} group of $G$, denoted $\Mod(\Sigma_g)[G]$,  is the group of all mapping class elements that fix the connected component of $\widetilde{\mathcal{K}}_{d,g}$ containing $G$. That is, $\phi\in\Mod(\Sigma_g)[G]$ if $G$ and $\phi(G)$ are related by a sequence of surgeries. 
\end{definition} 
Stabilizer groups are related to the connectedness of topological surgery graphs. In fact, via the {\em Putman trick} \cite{putmantrick}, $\widetilde{\mathcal{K}}_{d,g}$ is connected if and only if the combinatorial graph $\mathcal{K}_{d,g}$ is connected and the stabilizer group $\Mod(\Sigma_g)[G]$ of any $G \in \widetilde{\mathcal{K}}_{d,g}$ is the whole mapping class group. The connectedness of  $\widetilde{\mathcal{K}}_{d_3,g}$ relies on those two facts. But in general, the stabilizer group of a given unicellular graph is a proper subgroup of the mapping class group as we will see.

Suppose all of the vertices of $G$ are of even valence. Then $G$ defines a {\em cycle} in the mod-$2$ singular homology $C_1(\Sigma_g; \Z/2\Z)$, and hence a class $[G] \in H_1(\Sigma_g; \Z/2\Z)$.  

\begin{lemma}[Surgery invariant] Let $G_1$ and $G_2$ be two unicellular graphs, all of whose vertices have even degree. Suppose that $G_1$ and $G_2$ are related by a sequence of surgeries. Then $[G_1] = [G_2]$ in $H_1(\Sigma_g; \Z/2\Z)$.
\end{lemma}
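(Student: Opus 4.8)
The plan is to reduce to the case of a single surgery and then to compute, at the level of mod-$2$ chains, the effect of that surgery. Since the relation ``related by a sequence of surgeries'' is generated by individual surgeries, it suffices to treat $G_2 = \sigma_{x,y}(G_1)$ for a pair of intertwined oriented edges $x, y$ of $G_1$. First I would check that $[G_2] \in H_1(\Sigma_g; \Z/2\Z)$ is even defined: a surgery cuts $x$ and $y$ in the interiors of their edges and reconnects the resulting ends along $\lambda_{x,y}$, so it leaves the vertex set and every vertex valence of the graph unchanged. In particular every vertex of $G_2$ again has even degree, and $[G_2]$ makes sense.

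Next I would set up the local model. By definition (see \Cref{0surg}), the surgery is supported in an embedded square $Q \cong D^1 \times D^1$ --- a regular neighborhood of the arc $\lambda_{x,y}$, abutting $x$ and $y$ along two sub-arcs --- with the following features: $G_1$ and $G_2$ coincide outside $Q$; the portion $S^0 \times D^1 \subset \partial Q$ consists of two sub-arcs of $x$ and $y$ which are deleted from $G_1$; and the complementary portion $D^1 \times S^0 \subset \partial Q$ consists of the two arcs (running alongside $\lambda_{x,y}$) that are glued in to produce $G_2$. Any other strands of $G_1$ crossing the interior of $Q$ are untouched by the surgery and so contribute identically to $G_1$ and to $G_2$. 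Having chosen representatives of the two isotopy classes realizing this picture, we obtain the equality of mod-$2$ singular $1$-chains
\[
G_1 + G_2 = (S^0 \times D^1) + (D^1 \times S^0) = \partial(D^1 \times D^1) = \partial Q \in C_1(\Sigma_g; \Z/2\Z).
\]
Since $Q$ is an embedded disk in $\Sigma_g$, the cycle $\partial Q$ bounds, so $[\partial Q] = 0$ and therefore $[G_1] = [G_2]$ in $H_1(\Sigma_g; \Z/2\Z)$. The case $y = \bar{x}$ is handled in exactly the same way: the disjointness convention on the endpoints of $\lambda_{x,\bar{x}}$ ensures that the two feet of the band $Q$ are disjoint sub-arcs of the single edge $(x, \bar{x})$, and the computation is unchanged.

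The only point that requires genuine care is the chain-level identity $G_1 + G_2 = \partial Q$: one must pin down representatives of $G_1$ and $G_2$ in their isotopy classes for which the surgery is literally the cut-and-paste of \Cref{0surg} inside the embedded band $Q$, and then do the bookkeeping of which arcs are removed, which are added, and why all remaining strands cancel. Once this is in place the homological conclusion is immediate; equivalently, one may phrase the last step without chains by noting that the two non-crossing ways of joining the four points $G_1 \cap \partial Q$ by disjoint arcs inside the disk $Q$ differ by a null-homotopic loop, and hence represent the same class in $H_1(\Sigma_g; \Z/2\Z)$.
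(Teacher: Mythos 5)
Your argument is correct and is essentially identical to the paper's proof: both reduce to a single surgery and observe that, as mod-$2$ singular chains, $\sigma_{x,y}(G_1) = G_1 + \partial \Lambda_{x,y}$ where $\Lambda_{x,y} \cong I \times I$ is the rectangular band supporting the surgery, so the homology class is unchanged. Your additional remarks (that valences are preserved so $[G_2]$ is defined, and that the case $y = \bar{x}$ works the same way) are correct fillings-in of details the paper leaves implicit.
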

\begin{proof}
Let $\lambda_{x,y}$ be the arc associated to a surgery on $G_1$, and let $\Lambda_{x,y} \cong I \times I$ be the rectangular strip with core $\lambda_{x,y}$ used to perform the surgery. Then there is an equality of mod-$2$ singular chains
\[
\sigma_{x,y}(G_1) = G_1 + \partial \Lambda_{x,y},
\]
showing that $[\sigma_{x,y}(G_1)] = [G_1]$ in $H_1(\Sigma_g; \Z/2\Z)$. The result follows.
\end{proof}

Let $G$ be a unicellular graph with vertices of even degree, and let $\beta$ be a simple closed curve. Here and throughout, denote the Dehn twist about $\beta$ by $T_\beta$. If the mod-$2$ algebraic intersection $\pair{[\beta],[G]}_2$ is nonzero, then the transvection formula for Dehn twists shows that 
\[
[T_\beta(G)] = [G] + [\beta] \ne [G],
\]
showing that $G$ and $T_\beta(G)$ lie in distinct components of the topological surgery graph $\tilde{\mathcal K}_{d,g}$. Thus we find a necessary condition for a Dehn twist $T_\beta$ to be in $\Mod(\Sigma_g)[G]$: necessarily $\pair{[G], [\beta]}_2 = 0$.

\begin{figure}[htbp]
\begin{center}
\includegraphics[scale=0.22]{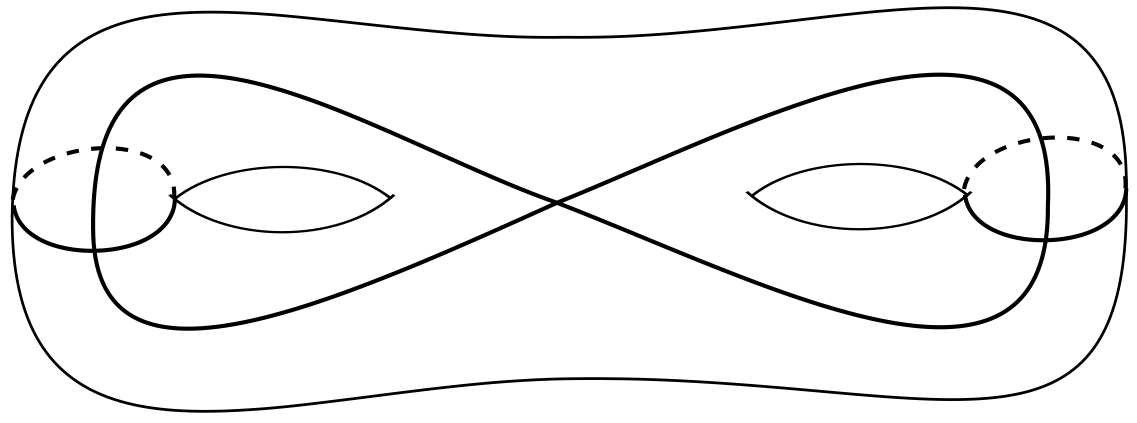}\hspace{1cm}
\includegraphics[scale=0.22]{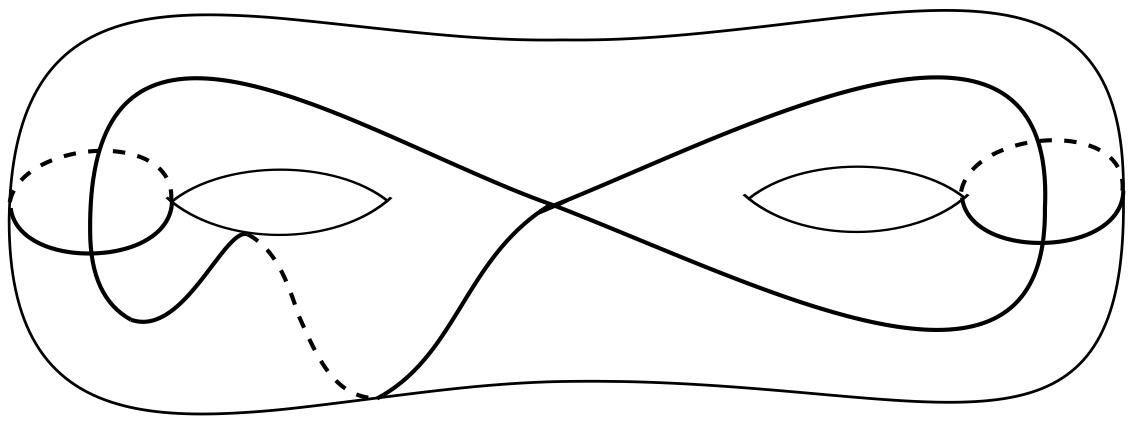}
\caption{Two unicellular collections in different connected components of $\tilde{K}_{(4,4,4),2}$.}
\label{twist}
\end{center}
\end{figure}
A useful criterion to check whether or not two mod-$2$ homology classes $[G_1]$ and $[G_2]$ are equal is to see how they intersect simple curves in $\Sigma_g$. In fact, $[G_1]=[G_2]\mod2$ if and only $\pair{G_1, \alpha}_2=\pair{G_2,\alpha}_2$ for every simple closed curve $\alpha$.

\begin{corollary} \label{corollary:stabsubgroup}
Let $G$ be a unicellular graph with even degree vertices. Then $\Mod(\Sigma_g)[G]$ is a subgroup of $\Mod(\Sigma_g)[x]$, where $x = [G] \in H_1(\Sigma_g; \Z/2\Z)$ and $\Mod(\Sigma_g)[x]$ denotes the stabilizer of $x$ under the action of $\Mod(\Sigma_g)$ on $H_1(\Sigma_g; \Z/2\Z)$. In particular, $\Mod(\Sigma_g)[G]$ is a {\em proper} subgroup of $\Mod(\Sigma_g)$. 
\end{corollary}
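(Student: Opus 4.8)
The proof is short and I would organize it into two steps: the stated inclusion, and then the ``in particular'' clause about properness.

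\emph{Step 1: the inclusion.} I would simply unwind the definitions and apply the Surgery invariant lemma together with naturality of homology under homeomorphisms. Let $\phi \in \Mod(\Sigma_g)[G]$, so that by definition $G$ and $\phi(G)$ are joined by a sequence of surgeries. Since $\phi$ is an orientation-preserving homeomorphism, $\phi(G)$ is again a unicellular graph with the same (even) degree partition as $G$, and its mod-$2$ homology class is $[\phi(G)] = \phi_*(x)$, where $\phi_*$ is the automorphism of $H_1(\Sigma_g;\Z/2\Z)$ induced by $\phi$. Applying the Surgery invariant lemma to the pair $G$, $\phi(G)$ yields $[G] = [\phi(G)]$, i.e. $\phi_*(x) = x$, which is exactly the condition $\phi \in \Mod(\Sigma_g)[x]$. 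Hence $\Mod(\Sigma_g)[G] \le \Mod(\Sigma_g)[x]$.

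\emph{Step 2: properness.} Since $\Mod(\Sigma_g)[G] \le \Mod(\Sigma_g)[x]$, it is enough to see that $\Mod(\Sigma_g)[x]$ is already a proper subgroup. Here I would invoke the observation recorded just before the corollary: by the transvection formula, for any simple closed curve $\beta$ and any $z \in H_1(\Sigma_g;\Z/2\Z)$ one has $(T_\beta)_*(z) = z + \langle z, [\beta]\rangle_2\,[\beta]$, so $T_\beta \notin \Mod(\Sigma_g)[x]$ as soon as $\langle x, [\beta]\rangle_2 \ne 0$. As the mod-$2$ algebraic intersection pairing on $H_1(\Sigma_g;\Z/2\Z)$ is non-degenerate, such a $\beta$ exists provided only that $x = [G] \ne 0$, and that completes the argument.

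The only input beyond the definitions is therefore the nonvanishing of $x = [G]$, which is elementary: $G$ is the $1$-skeleton of a CW structure on $\Sigma_g$ with a single $2$-cell, attached along the coding word $W_G$ in which each unoriented edge occurs exactly twice, so the cellular boundary $\partial_2$ vanishes mod $2$; thus in this model $H_1(\Sigma_g;\Z/2\Z)$ is the group of cellular $1$-cycles, and the mod-$2$ cycle $\sum_{e \in E} e$ representing $[G]$ is a nonzero chain (for $g \ge 1$ one has $|E| = |V| + 2g - 1 > 0$) and hence a nonzero class; alternatively one may cite the construction of the surgery invariant in \cite{karim,karim2}. I do not anticipate any genuine obstacle here — the only points to watch are that both $G$ and $\phi(G)$ meet the hypotheses of the Surgery invariant lemma (automatic, since $\phi(G)$ is homeomorphic to $G$) and that the $\Mod(\Sigma_g)$-action on $H_1(\Sigma_g;\Z/2\Z)$ is by the induced maps $\phi_*$, so that fixing the connected component of $G$ forces fixing the class $[G]$.
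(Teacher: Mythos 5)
Your proposal is correct and follows essentially the same route the paper intends: the inclusion is immediate from the Surgery invariant lemma applied to $G$ and $\phi(G)$, and properness comes from the transvection observation recorded just before the corollary, using that $x=[G]\ne 0$. Your explicit verification that $[G]\ne 0$ (via the one-cell CW structure) is a small but welcome addition that the paper leaves implicit.
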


The stabilizer group $\Mod(\Sigma_g)[G]$ is related to both the topological surgery graph and the combinatorial surgery graph in the following sense:

\begin{lemma}\label{lemma:conjequal}
Let $G_1$ and $G_2$ be two vertices of $\widetilde{\mathcal{K}}_{d,g}$. If $G_1$ and $G_2$ are in the same component, then the subgroups $\Mod(\Sigma_g)[G_1]$ and $\Mod(\Sigma_g)[G_2]$ of $\Mod(\Sigma_g)$ are equal. If $G_1$ and $G_2$ are not necessarily in the same component but the combinatorial graph $\mathcal{K}_{d,g}$ is connected, then $\Mod(\Sigma_g)[G_1]$ and $\Mod(\Sigma_g)[G_2]$ are conjugate in $\Mod(\Sigma_g)$. 
\end{lemma}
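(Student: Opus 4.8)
The plan is to recast the stabilizer group in purely combinatorial terms. Since $\Mod(\Sigma_g)$ acts on $\widetilde{\mathcal{K}}_{d,g}$ by simplicial automorphisms, it permutes the set $\pi_0(\widetilde{\mathcal{K}}_{d,g})$ of connected components, and I would first observe that $\Mod(\Sigma_g)[G]$ is exactly the stabilizer of the component $C_G$ containing $G$ under this permutation action: for $\phi \in \Mod(\Sigma_g)$ the image $\phi(G)$ lies in the component $\phi(C_G)$, so $\phi(G)$ is joined to $G$ by a sequence of surgeries if and only if $\phi(C_G) = C_G$. Granting this reformulation, the first assertion is immediate: if $G_1$ and $G_2$ lie in the same component then $C_{G_1} = C_{G_2}$, hence $\Mod(\Sigma_g)[G_1] = \Mod(\Sigma_g)[G_2]$.

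For the second assertion I would reduce, via the standard identity $\operatorname{Stab}(\phi \cdot C) = \phi\,\operatorname{Stab}(C)\,\phi^{-1}$ for a group action, to showing that when $\mathcal{K}_{d,g}$ is connected the group $\Mod(\Sigma_g)$ acts transitively on $\pi_0(\widetilde{\mathcal{K}}_{d,g})$. Indeed, any $\phi$ taking $C_{G_1}$ to $C_{G_2}$ then conjugates $\Mod(\Sigma_g)[G_1] = \operatorname{Stab}(C_{G_1})$ onto $\operatorname{Stab}(C_{G_2}) = \Mod(\Sigma_g)[G_2]$.

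To establish this transitivity I would argue by contradiction. Suppose the components fall into at least two $\Mod(\Sigma_g)$-orbits, and for each such orbit $\mathcal{A}$ set $W_{\mathcal{A}} = \bigcup_{C \in \mathcal{A}} C$, a $\Mod(\Sigma_g)$-invariant union of components of $\widetilde{\mathcal{K}}_{d,g}$. Writing $q \colon \widetilde{\mathcal{K}}_{d,g} \to \mathcal{K}_{d,g}$ for the quotient map, I note that every edge of $\widetilde{\mathcal{K}}_{d,g}$ has both endpoints in a single component, so its image is an edge of $\mathcal{K}_{d,g}$ contained in a single $q(W_{\mathcal{A}})$; together with the $\Mod(\Sigma_g)$-invariance of the $W_{\mathcal{A}}$ this forces the subgraphs $q(W_{\mathcal{A}})$ to be nonempty and pairwise vertex- and edge-disjoint, so that $\mathcal{K}_{d,g} = \bigsqcup_{\mathcal{A}} q(W_{\mathcal{A}})$ would be disconnected, a contradiction.

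The only mildly delicate point is this transitivity argument, and within it the verification that the $q(W_{\mathcal{A}})$ are genuinely disjoint: if a vertex $q(v)$ with $v \in W_{\mathcal{A}}$ equals $q(v')$ with $v' \in W_{\mathcal{B}}$, then $v' = \phi(v)$ for some $\phi \in \Mod(\Sigma_g)$, whence $v' \in \phi(W_{\mathcal{A}}) = W_{\mathcal{A}}$ and therefore $W_{\mathcal{A}} = W_{\mathcal{B}}$; the same reasoning applies to edges. Everything else is formal manipulation of stabilizers, so I do not anticipate any further obstacle.
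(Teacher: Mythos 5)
Your proof is correct and follows essentially the same route as the paper's: the first assertion is the in-component equality of stabilizers, and the second reduces to the conjugation identity once some $\phi\in\Mod(\Sigma_g)$ carries the component of $G_2$ to that of $G_1$. The only difference is that you carefully establish the existence of such a $\phi$ (transitivity of $\Mod(\Sigma_g)$ on $\pi_0(\widetilde{\mathcal{K}}_{d,g})$ when $\mathcal{K}_{d,g}$ is connected) via the orbit-partition argument, a step the paper asserts directly from connectivity of the quotient graph.
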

\begin{proof}
Let $\phi\in\Mod(\Sigma_g)[G_1]$; by definition $\phi(G_1)$ and $G_1$ are in the same component. Since $G_1$ and $G_2$ are related by a sequence of surgeries, so are $\phi(G_1)$ and $\phi(G_2)$. It follows that $\phi(G_2)$ and $G_1$ are in the same component, but this component also contains $G_2$. Hence, $\phi\in\Mod(\Sigma_g)[G_2]$ which implies that $\Mod(\Sigma_g)[G_1] \le\Mod(\Sigma_g)[G_2]$. The other inclusion follows the same idea. 

Assume now that $\mathcal{K}_{d,g}$ is connected, but $G_1$ and $G_2$ are no longer necessarily in the same component of $\widetilde{\mathcal{K}}_{d,g}$. Then there exists a sequence of surgeries between $G_1$ and $\phi(G_2)$, for some $\phi\in\Mod(\Sigma_g)$. By the above, $\Mod(\Sigma_g)[G_1]=\Mod(\Sigma_g)[\phi(G_2)]$, and the result now follows from the conjugacy equation $\Mod(\Sigma_g)[\phi(G_2)]=\phi \Mod(\Sigma_g)[G_2]\phi^{-1}$.
\end{proof}
Our main theorem in this article gives a description of $\Mod(\Sigma_g)[G]$ when $G$ is a regular 4-valent graph. We end this section by providing some elements in $\Mod(\Sigma_g)[G]$, namely Dehn twists. Below, when we speak of {\em intersections} of isotopy classes of curves and/or graphs, we mean that there exists a pair of representatives with the specified intersection pattern for which all intersections are transverse.

\begin{lemma}\label{lemma:twists}
Let $G$ be a unicellular graph with vertices of even degree. 
\begin{enumerate}
\item If $\beta$ is a simple curve that intersects $G$ at exactly one point, then $T_{\beta}^2\in\Mod(\Sigma_g)[G]$. 
\item Let $\beta_1$ be a simple curve that intersects $G$ at exactly two points lying on intertwined edges $x \ne y$, and let $\beta_2$ and $\beta_3$ be two simple curves that intersect $G$ exactly once at $x$ and $y$, respectively.  Then $T_{\beta_2}^2 T_{\beta_1}T_{\beta_3}^2\in\Mod(\Sigma_g)[G]$. 
\end{enumerate}
\end{lemma}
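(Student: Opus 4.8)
The plan is to realize each of these Dehn-twist products as the outcome of a single surgery applied to $G$, so that membership in $\Mod(\Sigma_g)[G]$ is immediate from the definition of the stabilizer group. The only real tool needed is a local comparison of the band-surgery (``$0$-surgery'') picture of \Cref{0surg} with the annulus picture of a Dehn twist.

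\emph{Part (1).} After isotopy, let $p$ be the unique point of $\beta\cap G$, lying in the interior of some edge which we orient and call $x$. Since $\beta$ meets $G$ only at $p$, the arc $\beta\smallsetminus\{p\}$ is properly embedded in the open disk $\Sigma_g\smallsetminus G$, with one endpoint limiting onto the side of the boundary polygon labelled $x$ and the other onto the side labelled $\bar x$. After the small perturbation needed to respect the convention that $\lambda_{x,\bar x}(0)$ precede $\lambda_{x,\bar x}(1)$ along $x$, I would take $\lambda_{x,\bar x}$ to be this arc and the band $\Lambda_{x,\bar x}$ a thin neighborhood of it. Tracing the strands of the resulting graph $\sigma_{x,\bar x}(G)$ inside this neighborhood, one checks that it is again unicellular (which incidentally confirms the earlier claim that a self-surgery always yields a unicellular graph in the $\Mod(\Sigma_g)$-orbit of $G$) and that, as an isotopy class of embedded graph, $\sigma_{x,\bar x}(G) = T_\beta^{\pm 2}(G)$: the band contributes two parallel copies of $\beta\smallsetminus\{p\}$, which reconnect along $x$ into a single strand running twice around $\beta$. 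That the exponent is even is also forced a posteriori by the surgery invariant, since $\pair{[\beta],[G]}_2 = 1$, so $T_\beta(G)$ lies in a different component of $\widetilde{\mathcal K}_{d,g}$ from $G$. Since $\sigma_{x,\bar x}(G)$ is related to $G$ by a surgery, $T_\beta^{\pm 2}\in\Mod(\Sigma_g)[G]$, and therefore $T_\beta^2\in\Mod(\Sigma_g)[G]$ because the stabilizer group is a subgroup of $\Mod(\Sigma_g)$.

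\emph{Part (2).} Since $x\ne y$ are intertwined, the surgery $\sigma_{x,y}(G)$ exists. The two points of $\beta_1\cap G$ cut $\beta_1$ into two arcs lying in $\Sigma_g\smallsetminus G$, and the intertwined hypothesis — together with the fact that the two intersection points lie on $x$ and $y$ — is exactly what arranges matters so that one of these two arcs runs from the right-hand side of $x$ to the right-hand side of $y$ and hence realizes the surgery arc $\lambda_{x,y}$, with $\Lambda_{x,y}$ a thin neighborhood of it. Carrying out $\sigma_{x,y}(G)$ and tracing strands in a neighborhood of $\beta_1\cup x\cup y$: the band running once along $\beta_1$ produces one full twist $T_{\beta_1}^{\pm1}$, while the two ends of the band, where it is spliced into $x$ and into $y$, produce double twists about the curves meeting $G$ exactly once at those two points — namely $\beta_2$ and $\beta_3$ — by the same reconnection mechanism as in part (1). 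Thus $\sigma_{x,y}(G) = T_{\beta_2}^{\pm2}T_{\beta_1}^{\pm1}T_{\beta_3}^{\pm2}(G)$ up to isotopy, which places this product in $\Mod(\Sigma_g)[G]$. Any leftover discrepancy in signs or in the (irrelevant) orientations of the $\beta_i$ can be absorbed using part (1): for instance, if the mechanism instead yields $T_{\beta_2}^{-2}T_{\beta_1}^{\pm1}T_{\beta_3}^{-2}$, then left-multiplying by $T_{\beta_2}^{2}\in\Mod(\Sigma_g)[G]$ and right-multiplying by $T_{\beta_3}^{2}\in\Mod(\Sigma_g)[G]$ gives $T_{\beta_1}^{\pm1}\in\Mod(\Sigma_g)[G]$, whence the stated product lies in $\Mod(\Sigma_g)[G]$ as well.

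The real work — and the step I expect to be the main obstacle — is entirely in the two local strand-tracing computations: keeping track of which stumps of $x$ (and of $y$) reconnect to which long side of the band, verifying that no extra self-intersections or components are introduced so that the surgered graph is genuinely unicellular (this is where the ``intertwined'' and ``single intersection point'' hypotheses must be used), and reading off the correct power and sign of the Dehn twist. Aligning the ``right-hand side'' and ``$\lambda(0)$ precedes $\lambda(1)$'' conventions so that the exponents for $\beta$, $\beta_2$, $\beta_3$ come out exactly $\pm 2$ is the fiddly point; the surgery invariant confirms that these exponents must be even but does not by itself determine them.
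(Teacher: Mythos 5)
Your treatment of part (1) is essentially the paper's argument: the paper also identifies the self-surgery $\sigma_{x,\bar x}(G)$ with $T_\beta^{2}(G)$, where $\beta$ is the curve meeting $G$ once at the edge $(x,\bar x)$ (it does so via a picture, \Cref{degn}, rather than a written strand-trace). That part is fine.

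Part (2), however, has a genuine gap: the claim that the \emph{single} surgery $\sigma_{x,y}(G)$ equals $T_{\beta_2}^{\pm2}T_{\beta_1}^{\pm1}T_{\beta_3}^{\pm2}(G)$ up to isotopy is false in general, because $\sigma_{x,y}(G)$ is generically not even in the $\Mod(\Sigma_g)$-orbit of $G$. Concretely, if $W_G = w_1 x w_2 \bar x w_3 y w_4 \bar y$, then $W_{\sigma_{x,y}(G)} = w_3 x w_2 \bar x w_1 y w_4 \bar y$, which for generic $w_1, w_3$ is a different coding and hence a different unicellular map; no mapping class $\phi$ can satisfy $\phi(G) = \sigma_{x,y}(G)$. (If a single surgery always preserved the orbit, the combinatorial surgery graph $\mathcal K_{d,g}$ would have no edges between distinct vertices, contradicting its connectedness in \Cref{theorem:previous}.) Relatedly, your local heuristic conflates a neighborhood of the single arc $\lambda_{x,y}$ with the pair of pants $\mathcal P$: the curves $\beta_1,\beta_2,\beta_3$ bound a regular neighborhood of $\lambda_{x,y}\cup\lambda_{\bar x,\bar y}$, and a band glued along $\lambda_{x,y}$ alone does not produce a homeomorphism supported on $\mathcal P$. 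The fix, which is what the paper does, is to perform \emph{two} surgeries, $G \to \sigma_{x,y}(G) \to G' := \sigma_{\bar x,\bar y}(\sigma_{x,y}(G))$; the coding of $G'$ is $w_3 x w_4 \bar x w_1 y w_2 \bar y$, which \emph{is} equivalent to $W_G$ up to cyclic permutation and relabelling, so $G' = \phi(G)$ for some $\phi$ supported on $\mathcal P$, and the local computation inside $\mathcal P$ (\Cref{pantsuite}) identifies $\phi = T_{\beta_2}^2 T_{\beta_1} T_{\beta_3}^2$. Your closing observation — that any such product can be corrected by part (1) applied to $\beta_2,\beta_3$ to extract $T_{\beta_1}^{\pm1}\in\Mod(\Sigma_g)[G]$ — is sound group theory, but it only helps once some product of the stated shape has actually been shown to lie in the stabilizer, which the single-surgery argument does not establish.
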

\begin{proof}
For (1), assume that $\beta$ intersects $G$ along the non-oriented edge $\{x, \bar{x}\}$. Then \Cref{degn} shows that $T_{\beta}^2(G)$ is given by $\sigma_{x,\bar{x}}(G)$.
\begin{figure}[htbp]
\labellist
\small
\pinlabel $x$ at 10 35
\pinlabel $\bar{x}$ at 20 50
\pinlabel $\beta$ at 53 20
\endlabellist
\begin{center}
\includegraphics{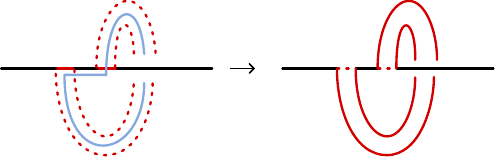}
\caption{Surgery on $G$ between $x$ and $\bar{x}$ is given by the square-twist about the indicated curve $\beta$.}
\label{degn}
\end{center}
\end{figure}

For (2), suppose that $x \neq y$ are intertwined. The sequence \[G\rightarrow \sigma_{x,y}(G)\rightarrow G':=\sigma_{\bar{x},\bar{y}}(G)\] of surgeries is such that $G$ and $G'$ are in the same $\Mod(\Sigma_g)$ orbit. In fact, if $w_1xw_2\bar{x}w_3yw_4\bar{y}$ is the coding of $G$, then $W_{\sigma_{x,y}(G)}=w_3xw_2\bar{x}w_1yw_4\bar{y}$ and $W_{G'}=w_3xw_4\bar{x}w_1yw_2\bar{y}$. Since $W_G$ and $W_{G'}$ are equal (up to cyclic permutation and relabelling), there exists $\phi\in\Mod(\Sigma_g)$ such that $\phi(G)=G'$. Let us compute $\phi$. 

\begin{figure}[htbp]
\begin{center}
\includegraphics[scale=0.15]{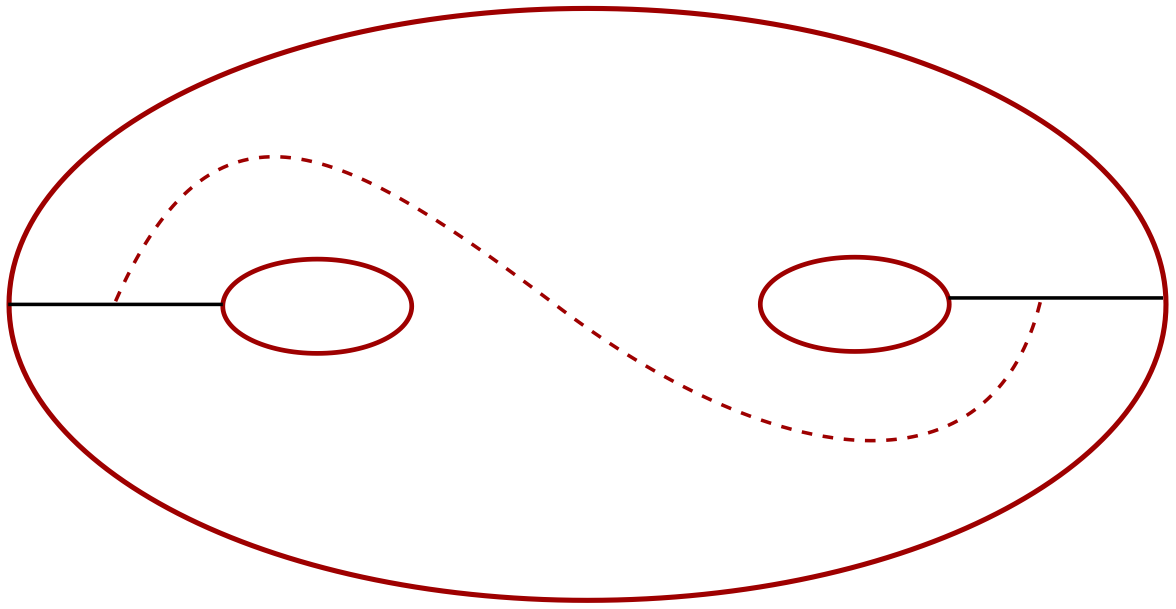}\hspace{0,5cm}
\includegraphics[scale=0.15]{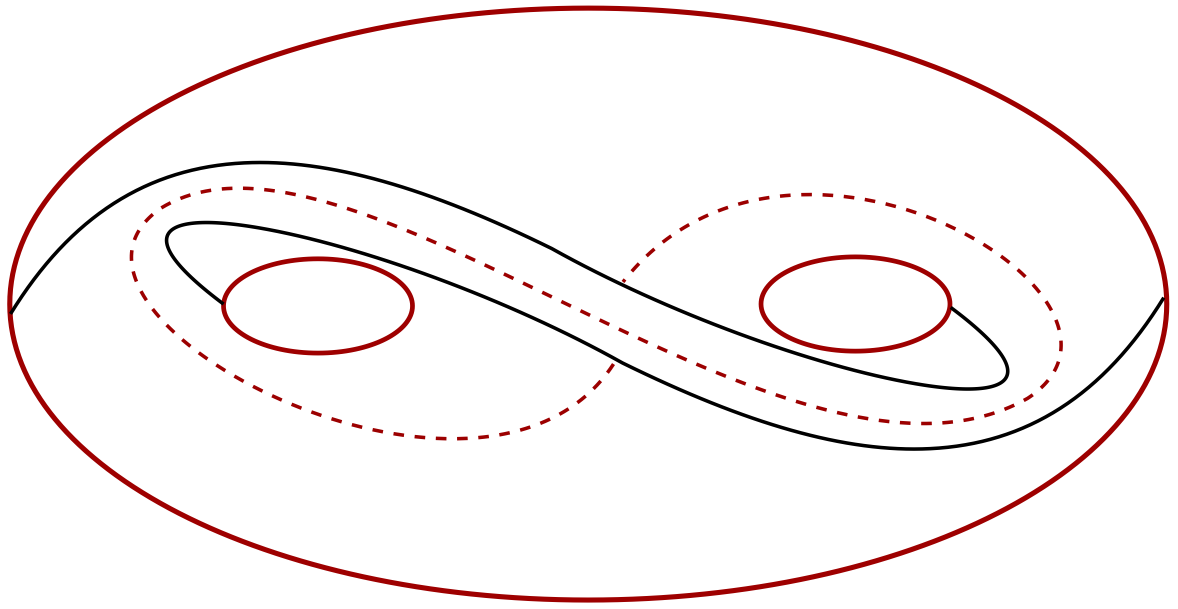}\hspace{0,5cm}
\includegraphics[scale=0.15]{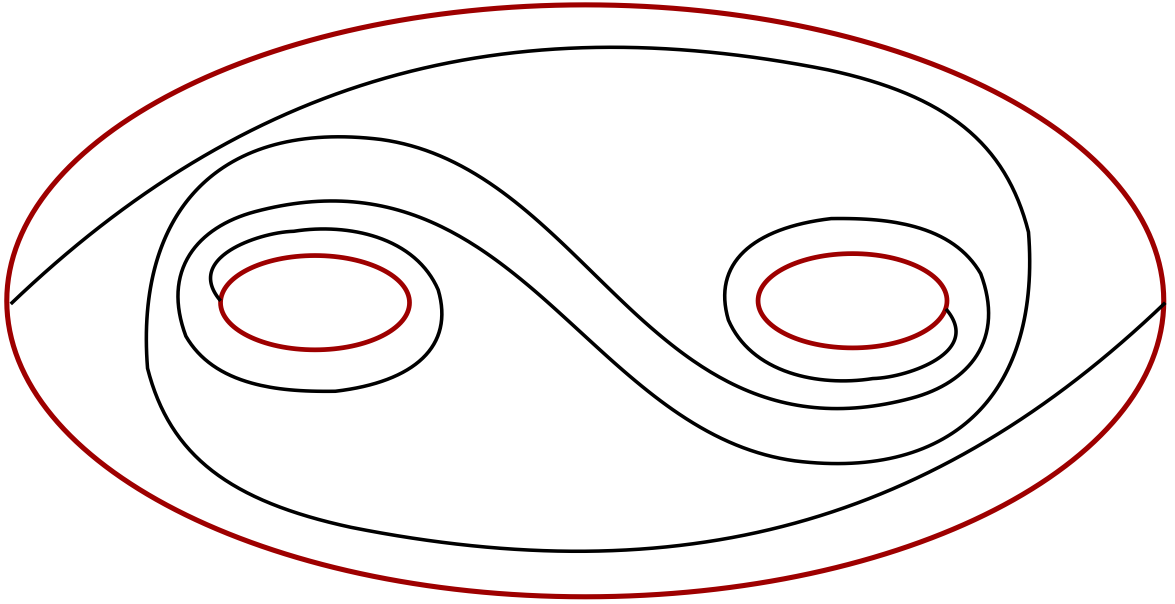}
\put(-204,19){\Large{$\rightarrow$}}
\put(-101,19){\Large{$\rightarrow$}}
\caption{Computing the effect of successive surgeries inside the pair of pants $\mathcal{P}$.}
\label{pantsuite}
\end{center}
\end{figure}

Since $x$ and $y$ are intertwined, $\lambda_{x,y}$ and $\lambda_{\bar{x},\bar{y}}$ intersect once and $\lambda:=\lambda_{x,y}\cup\lambda_{x,y}$ is a closed curve that self-intersects once. Taking a regular neighborhood of $\lambda$, we get a pair of pants $\mathcal{P}$ whose boundary $\beta_1$ (respectively $\beta_2$, $\beta_3$) intersects $G$ twice at $x$ and $y$ (respectively once each, at $x$ and $y$). So, $G$ and $G'$ differ only inside $\mathcal{P}$ and the sequence of surgeries depicted in \Cref{pantsuite} shows that $\phi=T_{\beta_2}^2T_{\beta}T_{\beta_3}^2$.
\end{proof}

\begin{definition}
Let $G$ be a unicellular graph on $\Sigma_g$. A Dehn twist $T_{\beta}$ (or the underlying curve $\beta$) is called \boldit{visible} relative to $G$ if $\beta$ intersects $G$ at two points contained in distinct intertwined edges. A Dehn twist $T_{\beta}$ is \boldit{admissible} if $T_{\beta}\in\Mod(\Sigma_g)[G]$.  
\end{definition} 

As a direct consequence of Lemma \ref{lemma:twists}, we have a very useful criterion.
\begin{corollary}\label{cor:visibletwist}
Let $G$ be a unicellular graph on $\Sigma_g$ and $\beta$ a simple curve that is visible relative to $G$. Then $T_{\beta} \in \Mod(\Sigma_g)[G]$. 
\end{corollary}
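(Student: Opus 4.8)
The plan is to deduce \Cref{cor:visibletwist} directly from the two parts of \Cref{lemma:twists}, using only the elementary fact that $\Mod(\Sigma_g)[G]$ is a subgroup of $\Mod(\Sigma_g)$ (it is the stabilizer of a connected component of $\widetilde{\mathcal{K}}_{d,g}$ under the simplicial action of the mapping class group, hence closed under products and inverses).

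First I would unwind the hypothesis. Since $\beta$ is visible relative to $G$, it meets $G$ transversally in exactly two points, one on each of two distinct unoriented edges; orienting these edges $x \ne y$ so that they are intertwined, we may take $\beta_1 := \beta$ in the statement of \Cref{lemma:twists}.2. Next I would fix auxiliary simple closed curves $\beta_2$ and $\beta_3$ meeting $G$ in exactly one point, lying on $x$ and on $y$ respectively; such curves always exist, since cutting $\Sigma_g$ along $G$ yields a disk and an embedded arc joining the two boundary copies of a chosen point of $x$ closes up to such a curve $\beta_2$ (and likewise for $y$), and in fact these are exactly the curves produced by the pair-of-pants construction in the proof of \Cref{lemma:twists}.2. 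The point is that this single choice of $\beta_2$ (resp. $\beta_3$) is simultaneously admissible for \Cref{lemma:twists}.2 (it meets $G$ once on $x$) and for \Cref{lemma:twists}.1 (it meets $G$ once, period).

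The conclusion is then a one-line computation: by \Cref{lemma:twists}.2 applied to $\beta_1=\beta$ and these $\beta_2,\beta_3$, we have $T_{\beta_2}^2 T_\beta T_{\beta_3}^2 \in \Mod(\Sigma_g)[G]$, and by \Cref{lemma:twists}.1 applied to $\beta_2$ and to $\beta_3$ separately we have $T_{\beta_2}^2, T_{\beta_3}^2 \in \Mod(\Sigma_g)[G]$; hence
$$T_\beta = \bigl(T_{\beta_2}^2\bigr)^{-1}\bigl(T_{\beta_2}^2 T_\beta T_{\beta_3}^2\bigr)\bigl(T_{\beta_3}^2\bigr)^{-1} \in \Mod(\Sigma_g)[G].$$
I do not expect a genuine obstacle here: the real content lives in \Cref{lemma:twists}, and this argument is pure bookkeeping. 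The only points that deserve (minor) care are that $\beta_2$ and $\beta_3$ can be chosen compatibly for both invocations of \Cref{lemma:twists}, and that a component stabilizer is indeed a subgroup. One could alternatively first observe that any simple closed curve meeting $G$ once on each of the intertwined edges $x$ and $y$ is isotopic to the outer pair-of-pants boundary $\beta_1$ appearing in \Cref{lemma:twists}.2, thereby identifying $\beta$ with that curve, but this is not logically necessary since the lemma is already stated for an arbitrary such curve.
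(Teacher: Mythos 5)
Your proposal is correct and matches the paper's intent exactly: the paper leaves this as a ``direct consequence of \Cref{lemma:twists},'' and the intended argument is precisely your bookkeeping step of combining \Cref{lemma:twists}.2 with two applications of \Cref{lemma:twists}.1 and using that the component stabilizer is a subgroup. Your auxiliary observations (existence of $\beta_2,\beta_3$ via an arc in the complementary disk, and their compatibility with both parts of the lemma) are the right minor points to check and are handled correctly.
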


For the remainder of the article, we will restrict attention to the case of {\em unicellular collections}- recall that these are unicellular graphs with the degree partition $d_4 = (4, \dots, 4)$, and consist of a union of simple curves. Visible Dehn twists with respect to a unicellular collection are admissible, and we will provide a set of admissible Dehn twists that generate $\Mod(\Sigma_g)[\Gamma]$.

Let $\Sigma_{g,n}$ denote the oriented surface of genus $g$ with $n\le 2$ boundary components (although we have been considering closed surfaces $\Sigma_g$ thus far, in \Cref{lemma:nextxi}, we will need to consider subsurfaces of $\Sigma_g$, necessarily with boundary). We recall that a collection $\mathcal{C}:=\{\gamma_1,\dots,\gamma_n\}$ of simple curves on $\Sigma_{g,n}$ is a \boldit{chain} if $i(\gamma_i,\gamma_{i+1})=1$ and $i(\gamma_k,\gamma_l)=0$ otherwise (here and throughout, $i(\cdot, \cdot)$ denotes the geometric intersection number).  The maximal number of curves in a chain $\mathcal{C}$ on $\Sigma_{g}$ or $\Sigma_{g,1}$ is $2g+1$, and is $2g+2$ on $\Sigma_{g,2}$.

When the chain has $2g$ simple curves, it turns out that it is a unicellular collection. Since $\mathcal{K}_{d_4,g}$ is connected for all $g\geq 1$ (\Cref{theorem:previous}.1), every connected component of $\widetilde{K}_{d_4,g}$ contains a unicellular collection which is a chain; let us fix one particular such $\Gamma_0$  and let $\gamma_0$ be a simple curve obtained by smoothing vertices of $\Gamma_0$ (see \Cref{origine}), representing the mod-$2$ homology class $[\Gamma_0]$. Our final result in this section exhibits a set of Dehn twists in the stabilizer $\Mod(\Sigma_g)[\Gamma_0]$; in the next section, we will see that these twists suffice to generate the stabilizer of the mod-$2$ homology class $[\Gamma_0]$.

\begin{figure}[htbp]
\begin{center}
\includegraphics[scale=0.5]{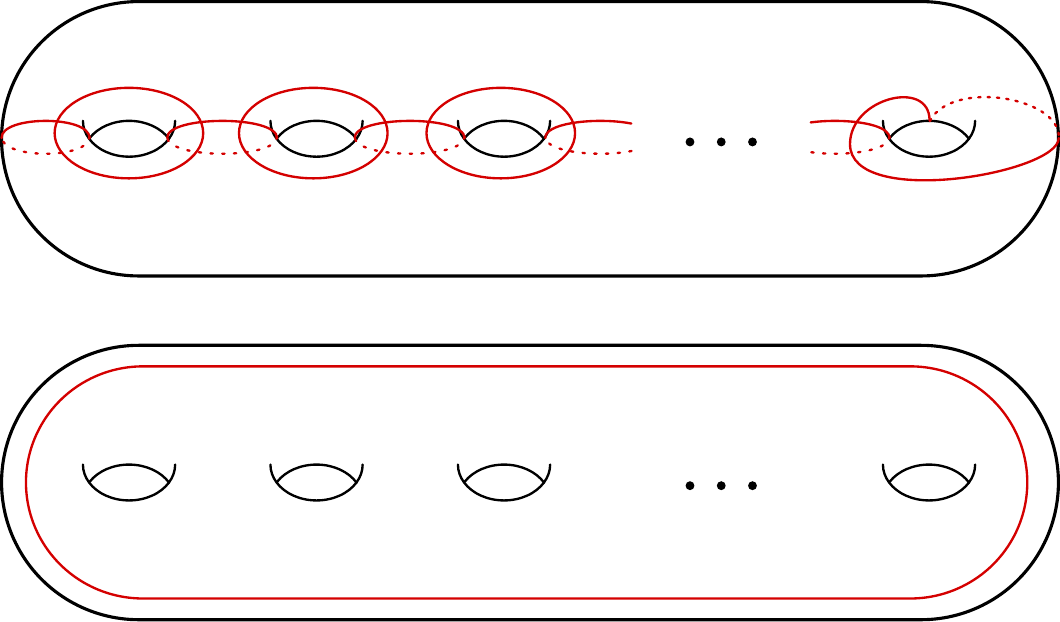}
\caption{The unicellular collection $\Gamma_0$ on top, and a simple curve representing its mod-$2$ homology class $\gamma_0$ on bottom. }
\label{origine}
\end{center}
\end{figure}

\begin{figure}[htbp]
\begin{center}
\labellist
\scriptsize
\pinlabel $\beta_1$ [t] at 62.36 92.87
\pinlabel $\beta_2$ [b] at 104.87 76.53
\pinlabel $\beta_3$ [t] at 150.22 92.87
\pinlabel $\gamma$ [bl] at 161.56 99.20
\pinlabel $\beta_{2g-2}$ [br] at 393.98 76.53
\pinlabel $\beta_{2g-1}$ [br] at 445.83 90.70
\pinlabel $\alpha$ at 515 75
\endlabellist
\includegraphics[scale=0.7]{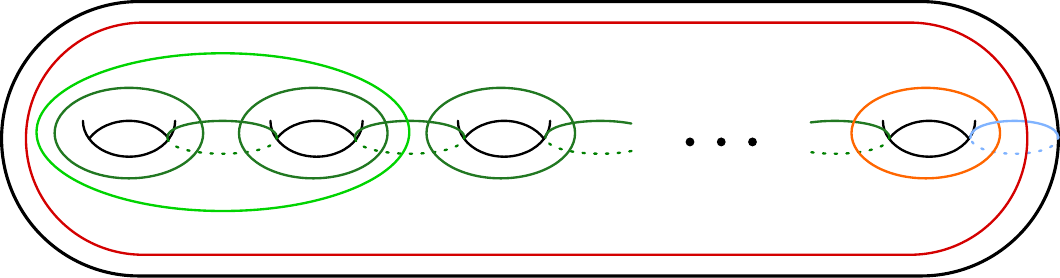}
\caption{The curves $\alpha, \beta_1, \dots, \beta_{2g-2}, \beta_{2g-1}, \gamma$ of \Cref{prop:genexhibit}.}
\label{curveconfig}
\end{center}
\end{figure}

\begin{proposition}\label{prop:genexhibit}
    With reference to \Cref{curveconfig}, the following mapping classes are contained in $\Mod(\Sigma_g)[\Gamma_0]$:
    \[
     T_\alpha^2, T_{\beta_1}, \dots, T_{\beta_{2g-2}}, T_{\beta_{2g-1}}, T_\gamma.
    \]
\end{proposition}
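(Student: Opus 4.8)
The plan is to verify directly that each of the listed mapping classes is admissible, using the two criteria already at our disposal: \Cref{lemma:twists}.1, which handles squared twists about curves meeting $\Gamma_0$ once, and \Cref{cor:visibletwist}, which handles ordinary twists about curves that are visible relative to $\Gamma_0$. All of the work is thus in fixing an explicit model — the chain $\Gamma_0$ of $2g$ simple curves drawn in \Cref{origine}, together with the curves $\alpha, \beta_1, \dots, \beta_{2g-1}, \gamma$ arranged as in \Cref{curveconfig} — and then reading intersection patterns off the picture.

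First I would dispatch $T_\alpha^2$. The curve $\alpha$ is positioned so that it meets $\Gamma_0$ transversely in a single point, lying in the interior of an edge (away from all vertices); this is immediate from \Cref{curveconfig}. \Cref{lemma:twists}.1 then gives $T_\alpha^2\in\Mod(\Sigma_g)[\Gamma_0]$ at once. (This $\alpha$ is also the curve playing the role of $\eta$ in \Cref{theorem:genset}, meeting the smoothed curve $\gamma_0$ exactly once.)

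Next, for each of $\beta_1,\dots,\beta_{2g-1}$ and $\gamma$, I would check two things: (i) the curve meets $\Gamma_0$ transversely in exactly two points, each in the interior of an edge, and these two edges are distinct; and (ii) the two resulting edges are intertwined. Point (i) is visible from the picture. For point (ii), the cleanest route is to compute the coding $W_{\Gamma_0}$ of the chain once and for all — as one reads along the chain it has a simple repeating block structure — and then, for each curve, to locate the two oriented edges $x,y$ carrying its intersection points and confirm that $x,\bar x, y, \bar y$ occur in this cyclic order in $W_{\Gamma_0}$; alternatively one can read the intertwined condition (equivalently, that $\lambda_{x,y}$ and $\lambda_{\bar x,\bar y}$ meet exactly once) directly off how the arcs of $\beta$ cut the complementary polygon $\Sigma_g\setminus\Gamma_0$. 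By the handle-to-handle symmetry of the model, the interior curves $\beta_2,\dots,\beta_{2g-2}$ are all handled by the same local computation, so that only the boundary cases $\beta_1$, $\beta_{2g-1}$, $\gamma$ — and their interaction with the two ends of the chain — need separate inspection. \Cref{cor:visibletwist} then yields $T_{\beta_i},T_\gamma\in\Mod(\Sigma_g)[\Gamma_0]$, completing the proof.

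I expect the main obstacle to be essentially bookkeeping rather than conceptual: one must fix conventions for the orientations and labels of the edges of $\Gamma_0$ so that the coding $W_{\Gamma_0}$ can be written down uniformly in $g$, and then track, for each of the $2g$ curves $\beta_1,\dots,\beta_{2g-1},\gamma$, precisely which sides of the complementary polygon its two intersection arcs land on. Once the coding and the labels are pinned down, checking the cyclic intertwined pattern is mechanical and uniform in $g$, and no input beyond \Cref{lemma:twists} and the combinatorial description of surgery from \Cref{bground} is required.
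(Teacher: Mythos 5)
Your treatment of $T_\alpha^2$ and of $T_{\beta_1},\dots,T_{\beta_{2g-2}},T_\gamma$ matches the paper: a single transverse intersection point handles $\alpha$ via \Cref{lemma:twists}.1, and the visibility criterion of \Cref{cor:visibletwist} handles the others, the relevant combinatorial fact being that in $\Gamma_0$ any two unoriented edges with no common vertex are intertwined for some choice of orientations. However, there is a genuine gap in your plan for $\beta_{2g-1}$: this curve is \emph{not} visible relative to $\Gamma_0$, so the intertwinedness check you propose to run ``mechanically'' will simply fail for it, and no choice of orientations or bookkeeping conventions will rescue it. The two intersection points of $\beta_{2g-1}$ with $\Gamma_0$ do not lie on a pair of intertwined edges, so \Cref{cor:visibletwist} does not apply, and your proposal supplies no fallback.

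The paper's workaround is a genuinely additional idea that your outline is missing: one first performs an explicit surgery taking $\Gamma_0$ to a new unicellular collection $\Gamma_1$ on which $\beta_{2g-1}$ \emph{is} visible (this is checked by locating a ``1-simple'' component of $\Gamma_1$, an edge adjacent to which is then intertwined with every other edge). Since $\Gamma_0$ and $\Gamma_1$ lie in the same component of $\widetilde{\mathcal K}_{d_4,g}$, \Cref{lemma:conjequal} gives $\Mod(\Sigma_g)[\Gamma_1]=\Mod(\Sigma_g)[\Gamma_0]$, and so $T_{\beta_{2g-1}}\in\Mod(\Sigma_g)[\Gamma_0]$ after all. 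The lesson is that the stabilizer is an invariant of the whole connected component, not just of the single vertex $\Gamma_0$, and one is free to change the unicellular collection by surgeries before testing visibility; without this step your argument does not close.
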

\begin{proof}
Comparing \Cref{origine} and \Cref{curveconfig}, one sees that the curve $\alpha$ intersects $\Gamma_0$ at a single point. By \Cref{lemma:twists}, $T_\alpha^2 \in \Mod(\Sigma_g)[\Gamma_0]$ as claimed.

We next argue that $T_{\beta_1}, \dots, T_{\beta_{2g-2}}, T_\gamma \in \Mod(\Sigma_g)[\Gamma_0]$. As illustrated in \Cref{visible}, the unicellular collection $\Gamma_0$ is such that two unoriented edges with no common vertex are intertwined with respect to some choice of orientations. This implies that the Dehn twists along the green curves $\beta_1, \dots, \beta_{2g-2}, \gamma$ shown in \Cref{curveconfig} are visible with respect to $\Gamma_0$, and hence contained in $\Mod(\Sigma_g)[\Gamma_0]$ by \Cref{cor:visibletwist}. 
\begin{figure}[htbp]
\begin{center}
\includegraphics[scale=0.35]{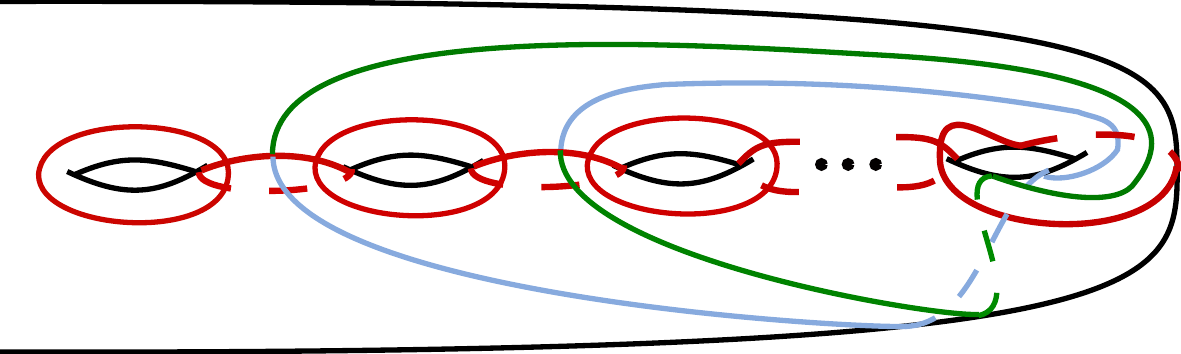}\hspace{0.6cm}\includegraphics[scale=0.35]{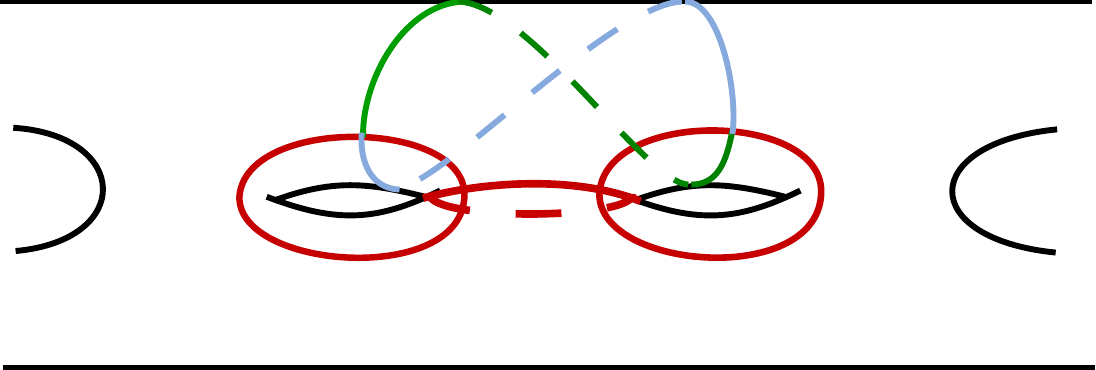}
\caption{Example of intertwined edges on $\Gamma_0$. Note how the arcs $\lambda_{x,y}$ and $\lambda_{\bar{x},\bar{y}}$ (shown in blue, resp. green) intersect once.}
\label{visible}
\end{center}
\end{figure}

It remains to show that $T_{\beta_{2g-1}} \in \Mod(\Sigma_g)[\Gamma_0]$. This will require more work, since $\beta_{2g-1}$ is not visible on $\Gamma_0$. We show below that $\beta_{2g-1}$ is visible on the unicellular collection $\Gamma_1$, and hence by \Cref{cor:visibletwist}, $T_{\beta_{2g-1}} \in \Mod(\Sigma_g)[\Gamma_1]$. \Cref{d1} shows that $\Gamma_1$ is obtained from $\Gamma_0$ by a surgery. It follows by \Cref{lemma:conjequal} that $\Mod(\Sigma_g)[\Gamma_1] = \Mod(\Sigma_g)[\Gamma_0]$ and hence $T_{\beta_{2g-1}} \in \Mod(\Sigma_g)[\Gamma_0]$ as required.

\begin{figure}[htbp]
\begin{center}
\includegraphics[scale=0.35]{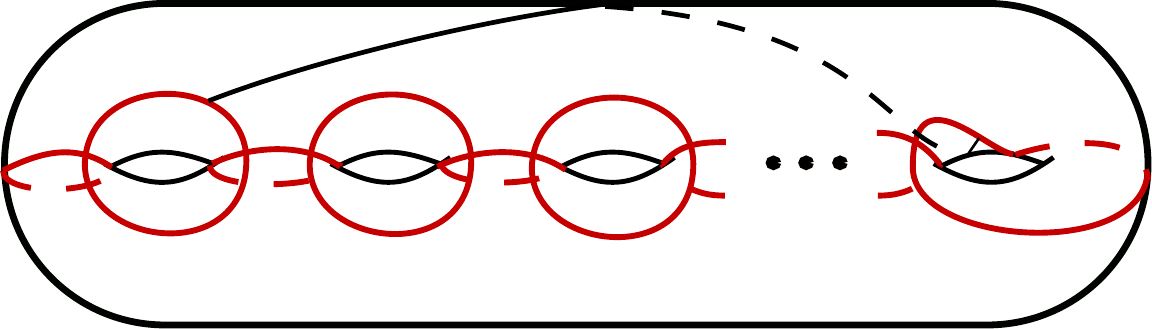}\hspace{2cm}\includegraphics[scale=0.35]{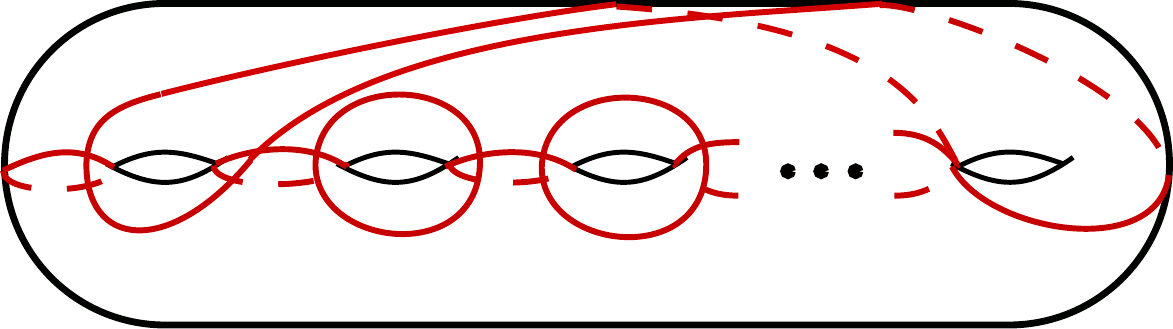}\put(-232,20){{$\rightarrow$}} \put(-7,0){$\Gamma_1$} \put(-262,0){$\Gamma_0$}
\caption{The surgery taking $\Gamma_0$ to $\Gamma_1$.}
\label{d1}
\end{center}
\end{figure}
In general, a curve $\alpha$ of a unicellular collection $\Gamma$ is called \boldit{1-simple} if $\alpha$  is simple and intersects $\Gamma\setminus\alpha$ once. If $(x,\bar{x})$ intersects a 1-simple curve $\alpha$ of $\Gamma$ then $(x,\bar{x})$ is intertwined with any edge of $\Gamma$ except $\alpha:=(a,\bar{a})$. In fact, if $x$ is the oriented edge toward $\alpha$ then the coding of $\Gamma$ is of the form $xa\bar{x}w$ and this implies that $x$ is intertwined with either $y$ or $\bar{y}$ for any other edge $y$ not equal to $a$.

In $\Gamma_1$ there is exactly one 1-simple curve $\alpha$ and the arc $\lambda$ along which the surgery on $\Gamma_0$ is made splits into two edges one of which is adjacent to $\alpha$; let us denote it $(x,\bar{x})$. It follows that $(x,\bar{x})$  is intertwined with any other edge in $\Gamma_1$; in particular with the edges $(t,\bar{t})$ and $(z,\bar{z})$ adjacent to $(x,\bar{x})$ that are not in $\alpha$. One of the visible Dehn twists supported by  $(t,\bar{t})$ and $(x,\bar{x})$ or by  $(z,\bar{z})$ and $(x,\bar{x})$ is along $\beta_{2g-1}$.      
\end{proof}

\section{Generating the stabilizer of a mod-$2$ homology class}\label{section:genset}

\begin{definition}\label{def:xistab}
    Let $\xi$ be an isotopy class of oriented simple closed curve on $\Sigma_g$. Define
    \[
    \Mod(\Sigma_g,\xi) \le \Mod(\Sigma_g)
    \]
    as the subgroup of elements preserving $\xi$. 

    Let $S_\xi \subset \Sigma_g$ be a subsurface of positive genus with two boundary components, both isotopic to $\xi$ as unoriented curves (such $S_\xi$ is unique up to isotopy). Recall \cite[Proposition 3.20]{farmar} that the inclusion $S_\xi \subset \Sigma_g$ induces a {\em surjection} 
\[
\Mod(S_\xi) \onto \Mod(\Sigma_g,\xi),
\]
and hence any set of generators for $\Mod(S_\xi)$ induces a set of generators for $\Mod(\Sigma_g,\xi)$.
\end{definition}

\begin{theorem}\label{theorem:genset}
    Let $g \ge 3$ be given, and let $x \in H_1(\Sigma_g; \Z/2\Z)$ be nonzero. Then $\Mod(\Sigma_g)[x]$ is generated by $T_\eta^2$ and $\Mod(\Sigma_g,\xi)$, where $\xi$ is any simple closed curve satisfying $[\xi] = x \in H_1(\Sigma_g; \Z/2\Z)$, and $\eta$ is any simple closed curve with geometric intersection $i(\xi, \eta) = 1$.
\end{theorem}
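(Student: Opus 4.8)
The plan is to realize $\Mod(\Sigma_g)[x]$ as the group generated by a vertex stabilizer together with an edge-traversing element for a suitable connected graph, via the Putman trick \cite{putmantrick} already used in \Cref{section:stabilizer}. First the easy inclusion: by the transvection formula $T_\eta$ acts on $H_1(\Sigma_g;\Z)$ by $v \mapsto v + \pair{v,[\eta]}[\eta]$, so $T_\eta^2$ acts by $v\mapsto v + 2\pair{v,[\eta]}[\eta]$ and hence trivially on $H_1(\Sigma_g;\Z/2\Z)$; in particular $T_\eta^2 \in \Mod(\Sigma_g)[x]$. Likewise every element of $\Mod(\Sigma_g,\xi)$ fixes $[\xi]=x$ already integrally, so $\Mod(\Sigma_g,\xi)\le\Mod(\Sigma_g)[x]$, and thus $\pair{T_\eta^2,\Mod(\Sigma_g,\xi)}\le\Mod(\Sigma_g)[x]$; only the reverse inclusion needs work. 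Since $x\neq 0$, any simple closed curve representing $x$ is nonseparating, so by the change-of-coordinates principle any two oriented simple closed curves with mod-$2$ class $x$ are interchanged by some $\phi\in\Mod(\Sigma_g)$; such a $\phi$ automatically has $\phi_*(x)=x$, so $\Mod(\Sigma_g)[x]$ acts transitively on the set $S_x$ of isotopy classes of oriented simple closed curves with mod-$2$ homology class $x$, and the stabilizer of the vertex $\xi$ is exactly $\Mod(\Sigma_g,\xi)$ (anything fixing $\xi$ as an oriented curve fixes $[\xi]=x$).

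Next I would form the graph $X$ with vertex set $S_x$, joining $\zeta$ and $\zeta'$ by an edge whenever $\zeta' = T_\delta^{\pm 2}(\zeta)$ for some simple closed curve $\delta$ with $i(\zeta,\delta)=1$ (note $T_\delta^{\pm2}$ acts trivially mod $2$, so it preserves $S_x$). Then $\Mod(\Sigma_g)[x]$ acts on $X$ by graph automorphisms, transitively on vertices, with vertex stabilizer $\Mod(\Sigma_g,\xi)$. The edges incident to $\xi$ run to the curves $T_\delta^{\pm2}(\xi)$ with $i(\xi,\delta)=1$, and change of coordinates for pairs of curves meeting once shows $\Mod(\Sigma_g,\xi)$ acts transitively on the set of such $\delta$; hence these edges form at most two $\Mod(\Sigma_g,\xi)$-orbits, one traversed by $T_\eta^2$ and the other by $T_\eta^{-2}=(T_\eta^2)^{-1}$. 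Granting that $X$ is connected, the Putman trick then yields $\Mod(\Sigma_g)[x]=\pair{\Mod(\Sigma_g,\xi),\,T_\eta^2}$, which is the theorem. (Independence of the choices of $\xi$ and $\eta$ is then automatic, since any two admissible choices are conjugate inside $\Mod(\Sigma_g)[x]$ by the transitivity statements above.)

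The substance of the proof — and the step I expect to be the main obstacle — is the connectivity of $X$. I would show every $\zeta\in S_x$ is joined to $\xi$ by a path, by induction on $i(\zeta,\xi)$, which is even since the mod-$2$ intersection form is alternating. For the inductive step with $i(\zeta,\xi)=2k>0$, one chooses a simple closed curve $\delta$ with $i(\delta,\zeta)=i(\delta,\xi)=1$ straddling a ``cancellable'' pair of intersection points of $\zeta$ with $\xi$, so that for a suitable sign the curve $T_\delta^{\pm2}(\zeta)\in S_x$ — which is $X$-adjacent to $\zeta$ — has geometric intersection $2k-2$ with $\xi$; producing such a $\delta$ is a surgery argument of the kind used to prove connectivity of the curve complex, and the hypothesis $g\ge 3$ supplies the room to do so. The base case $i(\zeta,\xi)=0$ is the delicate one: now $\zeta$ and $\xi$ are disjoint and cobound two subsurfaces with $\Sigma_g = R_1\cup_{\zeta\sqcup\xi}R_2$, with $R_1$ of positive genus, and one must exhibit an explicit chain of $T_\delta^{\pm2}$-moves carrying $\zeta$ to $\xi$ through the genus of $R_1$, which I would organize as a secondary induction on the genus of the cobounding region.

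An alternative and possibly shorter route bypasses $X$ entirely: import a finite generating set for $\Mod(\Sigma_g)[x]$ from the literature \cite{Dey} and check directly that each generator lies in $\pair{\Mod(\Sigma_g,\xi),T_\eta^2}$ — a Dehn twist about a curve disjoint from $\xi$ lies in $\Mod(\Sigma_g,\xi)$, while a square $T_c^2$ about a curve $c$ with $i(c,\xi)=1$ equals $gT_\eta^2g^{-1}$ for a suitable $g\in\Mod(\Sigma_g,\xi)$ by change of coordinates, so the work is to reduce the generators of \cite{Dey} to this form. Either way the crux is the same geometric input — that simple closed curves in a fixed mod-$2$ homology class can be connected up using only squares of twists about ``dual'' curves — and it is this input that I expect to require the most care.
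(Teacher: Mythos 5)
Your setup is sound as far as it goes: the easy inclusion is correct, and the Putman-trick framework (transitive action of $\Mod(\Sigma_g)[x]$ on the set $S_x$ of oriented curves representing $x$ mod $2$, vertex stabilizer $\Mod(\Sigma_g,\xi)$, edge orbits covered by $T_\eta^{\pm 2}$) is a legitimate way to organize the argument. But the proof has a genuine gap: the connectivity of your graph $X$ is, given the rest of your setup, \emph{equivalent} to the theorem, and you do not prove it — you only say what you ``would'' do. Worse, the inductive step you sketch does not work as stated. You propose to reduce $i(\zeta,\xi)$ by applying $T_\delta^{\pm 2}$ for a curve $\delta$ dual to both $\zeta$ and $\xi$; but twisting $\zeta$ about such a $\delta$ splices two parallel copies of $\delta$ into $\zeta$, and there is no reason this decreases geometric intersection with $\xi$ — the standard inequality $i(T_\delta^n(\zeta),\xi)\ge |n|\, i(\delta,\zeta)\, i(\delta,\xi)-i(\zeta,\xi)$ only bounds it from below, and in general squared twists \emph{increase} intersection numbers. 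The surgery arguments you allude to (as in connectivity of the curve complex) replace $\zeta$ by a surgered curve, which is not a move of the form $T_\delta^{\pm2}$; constraining the moves to squared twists about dual curves is exactly what makes this hard, and no argument is supplied. The base case ($\zeta$ disjoint from $\xi$) is likewise only asserted; carrying it out requires a real input — in the paper it is the alternate form of the chain relation applied inside the subsurface cobounded by the two curves (\Cref{lemma:nextxi}).

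There is also a piece of the problem your sketch never confronts: two curves in $S_x$ can represent different \emph{integral} classes (e.g.\ $x_1$ and $x_1+2y_1$), and any connectivity proof for $X$ must move between integral lifts of $x$ using only squared twists. The paper isolates this as Step 1 and handles it by an explicit Euclidean-algorithm manipulation of the coefficients of $f(x_1)$ in a symplectic basis, using $T_\xi^{\pm1}$, $T_\eta^{\pm2}$, and twists supported in $S_\xi$; only after reducing to the integrally-homologous case does it invoke the connectivity result of \cite{putmantrick} (for the graph of disjoint homologous curves) together with the chain relation. Your alternative route via \cite{Dey} is also only a plan (``the work is to reduce the generators of \cite{Dey} to this form''). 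In short: the skeleton is fine, but every point where the actual mathematics of the theorem lives — the intersection-reduction step, the disjoint base case, and the change of integral lift — is either missing or rests on a step that would fail.
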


\begin{proof}
    Define
    \[
    \mathcal{G} := \pair{T_\eta^2, \Mod(\Sigma_g,\xi)}.
    \]
    There is an evident containment $\mathcal{G} \le \Mod(\Sigma_g)[x]$. We will argue that this in an equality in two steps. Let $f \in \Mod(\Sigma_g)[x]$ be given. In Step 1, we will produce $f_1$ in the coset $\mathcal{G} f$ that preserves the {\em integral} homology class of $\xi$. Then in Step 2, we will produce $f_2 \in \mathcal{G} f$ that preserves the oriented isotopy class of $\xi$, so that $f_2 \in \Mod(\Sigma_g,\xi) \le \mathcal{G}$, ultimately showing that $f \in \mathcal{G}$. To avoid unnecessary notational complexity, we will not track the specific modifications made to $f$ by $\mathcal{G}$, instead speaking of {\em adjusting} $f$ (tacitly always by left multiplication by some $\gamma \in \mathcal{G}$).

    \para{Step 1: Preserving integral homology} Recall that a {\em symplectic basis} for $H_1(\Sigma_g; \Z)$ is a generating set $x_1, y_1, \dots, x_g, y_g$ such that $\pair{x_i,y_i} = 1$ and all other pairings are zero. We recall the well-known fact that a pair of elements $x,y \in H_1(\Sigma_g ;\Z)$ satisfying $\pair{x,y} = 1$ can be extended to a symplectic basis with $x_1 =x$ and $y_1 = y$, and apply this to $x = [\xi]$ and $y = [\eta]$. 
    \begin{remark} \label{remark:classes}
    In these coordinates, classes in the subspace 
    \[
    H_1(\Sigma_g \setminus \xi; \Z) := \pair{x_1, x_2, y_2, \dots, x_g, y_g}
    \]
    admit representatives as simple closed curves supported on $S_\xi$. 
    \end{remark}
    We also define the symplectic subspace
    \[
    H' = \pair{x_2, y_2, \dots, x_g, y_g}.
    \]

    By hypothesis, $f(x_1) = x_1 \pmod 2$, so that
    \[
    f(x_1) = a x_1 + b x_2 + w,
    \]
    where $a$ is odd, $b$ is even, and $w = 2 w'$ for some $w' \in H'$. Define
    \[
    c:= \gcd(w) \qquad \mbox{and} \qquad d:= \gcd(a,b);
    \]
    note that $c$ is even, $d$ is odd, and $\gcd(c,d) = 1$. 

    \para{Claim 1} {\em $f$ can be adjusted so that $f(x_1) = d x_1 + 2d y_1 + c x_2$.} 
    \proof Note first that $\Mod(\Sigma_g,\xi)$, and hence $\mathcal{G}$, acts transitively on elements of $H'$ of given gcd, while leaving $x_1$ and $y_1$ components unchanged. Thus $f$ can be adjusted so that $f(x_1) = a x_1 + b y_1 + c x_2$. 
    
    The second adjustment can be performed via a variant of the Euclidean algorithm. The Dehn twist $T_\xi^{\pm 1} \in \Mod(\Sigma_g,\xi) \le \mathcal{G}$ takes the vector $ax_1 + by_1 + c x_2$ to $(a\pm b)x_1 + b y_1 + c x_2$, and $T_\eta^{\pm 2} \in \mathcal{G}$ takes $ax_1 + b y_1 + cx_2$ to $ax_1 + (b\mp 2a) y_1 + c x_2$. If $0 < \abs{b} < \abs{a}$, then, for appropriate choice of sign, $\abs{a\pm b} < \abs{a}$. If instead $\abs{b} > \abs{a} > 0$, then likewise $\abs{b \mp 2a} < \abs{b}$ for appropriate choice of sign. In this way, $ax_1 + by_1 + cx_2$ can be taken to $dx_1 + c x_2$, and hence to $dx_1 + 2d y_1 + c x_2$, via a final application of $T_\eta^2$. \qed

    \para{Claim 2} {\em $f$ can further be adjusted so that $f(x_1) = (c-d) x_1 + 2d y_1 + 2d' x_2$ for some integer $d'$.}
    \proof By \Cref{remark:classes}, there is a simple closed curve $\zeta \subset S_\xi$ with $[\zeta] = x_1 + y_2$ in $H_1(\Sigma_g; \Z)$. The twist $T_\zeta \in \mathcal{G}$ then takes $dx_1 + 2d x_2 + c x_2$ to $(c-d)x_1 + 2d y_1 + (2c-2d) x_2$ as claimed. \qed

    \para{Claim 3} {\em $f$ can further be adjusted so that $f(x_1) = x_1 + 2y_1 + 2d'x_2$.} 
    \proof Following Claim 2, this now follows by repeating the argument of Claim 1, noting that since $\gcd(c,d) = 1$ and $c$ is even and $d$ is odd, also $\gcd(c-d, 2d) = 1$. \qed
    
    To finish the proof of Step 1, we observe that by \Cref{remark:classes}, there is a simple closed curve $\zeta' \subset S_\xi$ with $[\zeta'] = x_1 + x_2$. Acting on $H_1(\Sigma_g; \Z)$,
    \[
    T_\eta^2 T_\xi^{-d'} T_{\zeta'}^{d'}(x_1 + 2y_1 + 2d' x_2) = T_\eta^2 (x_1 + 2y_1) = x_1.
    \]

    \para{Step 2: Preserving isotopy} Following Step 1, given $f \in \Mod(\Sigma_g)[x]$, we can adjust $f$ within $\mathcal{G} f$ to assume that $f(x) = x$ in $H_1(\Sigma_g; \Z)$. We will now show how such $f$ can be adjusted so that $f(\xi) = \xi$ on the level of isotopy classes of oriented curves.

    \begin{lemma} \label{lemma:allsquares}
        Let $\gamma$ be an isotopy class of simple closed curve on $\Sigma_g$ that satisfies $i(\xi, \gamma) = 1$. Then $T_\gamma^2 \in \mathcal{G}$.
    \end{lemma}
    \begin{proof}
    Choose representatives for $\gamma$ and $S_\xi$ in minimal position; by abuse of notation we continue to refer to these by the same symbols. Since $i(\gamma, \xi) = 1$ and the representatives are in minimal position, it follows that $\gamma \cap S_\xi$ is a single arc connecting the boundary components of $S_\xi$. Possibly after several applications of $T_\xi$, also $\gamma \cap (\Sigma_g \setminus S_\xi)$ is isotopic to $\eta \cap (\Sigma_g \setminus S_\xi)$. As $\Mod(S_\xi)$ acts transitively on isotopy classes of arcs connecting the boundary components, it follows that there is $g \in \Mod(S_\xi)$ taking $\gamma \cap S_\xi$ to $\eta \cap S_\xi$. Thus $T_\xi^k g (\gamma) = \eta$. As $T_\eta^2 \in \mathcal{G}$, it follows that also $T_\gamma^2 \in \mathcal{G}$ as claimed. 
    \end{proof}

    \begin{lemma}\label{lemma:nextxi}
        Let $\xi'$ be an oriented simple closed curve such that $[\xi] = [\xi']$ in $H_1(\Sigma_g;\Z)$ and such that $i(\xi, \xi') = 0$. Then there is an element $\tau \in \mathcal{G}$ such that $\tau(\xi) = \xi'$. Consequently, $\mathcal{G}$ contains the element $T_{\xi'}$ and the subgroup $\Mod(\Sigma_g)[\xi']$.
    \end{lemma}
\begin{figure}[ht]
\labellist
\small
\pinlabel $\beta$ [l] at 247 102.04
\pinlabel $\beta'$ [l] at 247 25.51
\pinlabel $\xi$ [tl] at 255 75
\pinlabel $\xi'$ [bl] at 277.77 107.71
\pinlabel $\alpha_2$ [b] at 141.72 93.53
\pinlabel $\alpha_{2k+1}$ [br] at 3 76.53
\pinlabel $S$ at 60 30
\endlabellist
\includegraphics[scale=0.8]{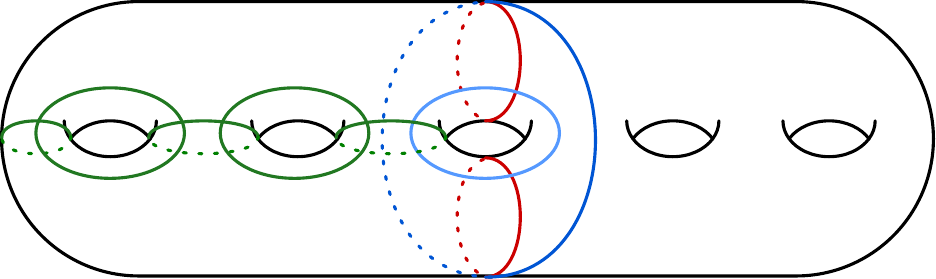}
\caption{The configuration of curves involved in \Cref{lemma:nextxi}, illustrated for $g = 5$ and $k = 2$. The label on $\alpha_1$ is not shown for clarity.}
\label{figure:bp}
\end{figure}
    
    \begin{proof}
        By the change-of-coordinates principle, $\xi$ and $\xi'$ can be depicted in the form shown in \Cref{figure:bp}. As shown there, there are curves $\beta, \beta'$ such that $T_\beta T_\beta'^{-1}(\xi) = \xi'$. We will show that $T_\beta T_{\beta'}^{-1} \in \mathcal{G}$.

        To see this, let $S \subset \Sigma_g$ be either of the subsurfaces bounded by $\beta \cup \beta'$. Again by the change-of-coordinates principle, there is a maximal chain $\alpha_1, \dots, \alpha_{2k+1} \subset S$ such that $i(\alpha_1, \xi) = 1$ and such that $i(\alpha_j, \xi) = 0$ for $j \ge 2$. Necessarily then $\alpha_j$ is supported on $\Sigma_g \setminus \xi$ for $j \ge 2$, so that $T_{\alpha_j} \in \mathcal{G}$ for $j  \ge 2$. By \Cref{lemma:allsquares}, since $i(\alpha_1, \xi) = 1$, also $T_{\alpha_1}^2 \in \mathcal{G}$.

        By the alternate formulation of the chain relation \cite[Section 4.4.1]{farmar}, $T_\beta T_{\beta'}$ is an element of the subgroup generated by $T_{\alpha_1}^2$ and $T_{\alpha_j}$ for $j \ge 2$, and {\em a fortiori} is an element of $\mathcal{G}$. By \Cref{lemma:allsquares}, also $T_\beta^2 \in \mathcal{G}$, so that $T_\beta T_{\beta'}^{-1} \in \mathcal{G}$.

        The final claim follows by noting that conjugation by $T_\beta T_{\beta'}^{-1}$ takes $T_\xi$ to $T_{\xi'}$ and $\Mod(\Sigma_g,\xi)$ to $\Mod(\Sigma_g)[\xi']$.
    \end{proof}

    We can now complete the proof of Step 2. According to \cite[Theorem 1.9]{putmantrick}, since $g \ge 3$, there is a sequence of oriented simple closed curves $\xi_1, \dots, \xi_n \subset \Sigma_g$ with $\xi_1 = \xi$ and $\xi_n = f(\xi)$, such that $[\xi_i] = [\xi]$ for all $i$,  and such that $i(\xi_j, \xi_{j+1}) = 0$ for all $j < n$. By repeated application of \Cref{lemma:nextxi}, $\mathcal{G}$ contains elements $\tau_1, \dots, \tau_{n-1}$ such that $\tau_{j}(\xi_{j+1}) = \xi_j$ for all $1 \le j \le n-1$. Then
    \[
    \tau_1 \dots \tau_{n-1} f(\xi) = \xi,
    \]
    and $\tau_1 \dots \tau_{n-1} f \in \mathcal{G}$, completing Step 2 and the argument as a whole.
    \end{proof}

We can now complete the proof of \Cref{maintheorem}.

\begin{figure}[ht]
\includegraphics[scale=0.6]{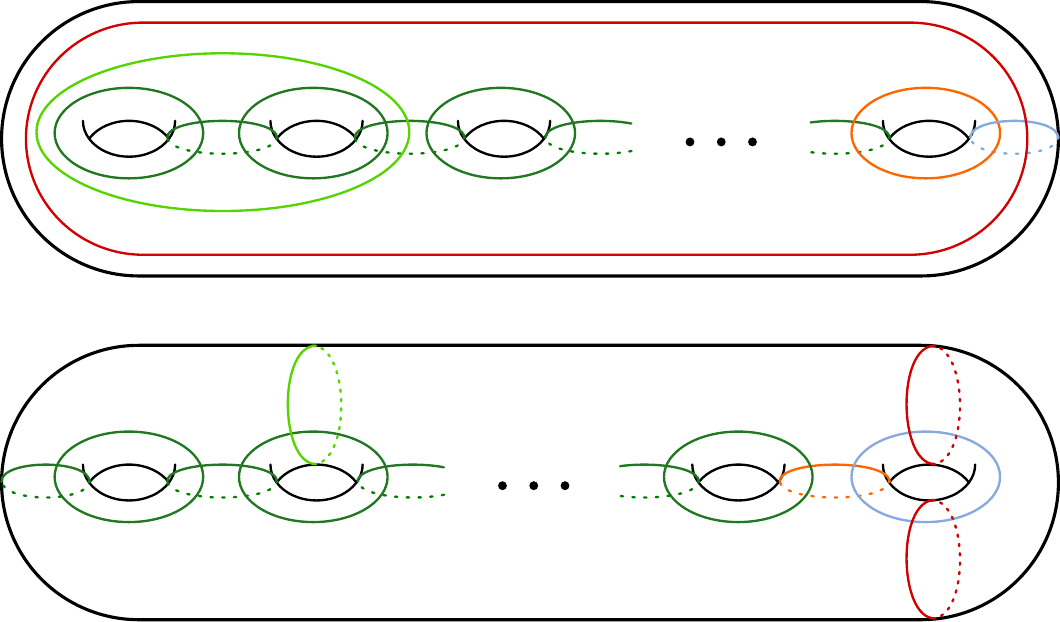}
\caption{Identifying the configuration in \Cref{curveconfig} with the Humphries generating set for $\Mod(\Sigma_{g-1,2})$.}
\label{secrethumphries}
\end{figure}

\begin{proof}[Proof of \Cref{maintheorem}]
    Let $\Gamma \in \tilde{\mathcal{K}}_{d_4,g}$ be given. By \Cref{lemma:conjequal}, if $\Gamma'$ is in the same component of $\tilde{\mathcal{K}}_{d_4,g}$, then there is an equality of stabilizer groups $\Mod(\Sigma_g)[\Gamma] = \Mod(\Sigma_g)[\Gamma']$, and $\Mod(\Sigma_g)[\Gamma]$ is conjugate to {\em any} stabilizer group $\Mod(\Sigma_g)[\Gamma'']$ regardless of whether $\Gamma$ and $\Gamma''$ are in the same component. It follows that without loss of generality, we can take $\Gamma = \Gamma_0$ as in \Cref{origine}. 
    
    Let $x \in H_1(\Sigma_g; \Z/2\Z)$ be the surgery invariant of $\Gamma_0$. Resolving the crossings in $\Gamma_0$ as shown in \Cref{origine}, one obtains the simple closed curve $\xi$ shown therein as an explicit representative for $x$.
    
    By \Cref{corollary:stabsubgroup}, there is a containment $\Mod(\Sigma_g)[\Gamma_0] \le \Mod(\Sigma_g)[x]$. To show the opposite containment, we appeal to \Cref{theorem:genset}, with $\xi$ as in \Cref{origine} and $\eta = \alpha$ as in \Cref{curveconfig}. By \Cref{prop:genexhibit}, the mapping classes $T_\alpha^2, T_{\beta_1}, \dots, T_{\beta_{2g-1}}$, and $T_\gamma$ all belong to $\Mod(\Sigma_g)[\Gamma_0]$. \Cref{secrethumphries} shows that the curves $\beta_1, \dots, \beta_{2g-1}, \gamma$ form the configuration of the Humphries generating set on the subsurface $\Sigma_\xi$. Thus by \Cref{theorem:genset}, it follows that $T_{\beta_1}, \dots, T_{\beta_{2g-1}}, T_\gamma, T_\alpha^2$ together generate $\Mod(\Sigma_g)[x]$, showing the desired equality $\Mod(\Sigma_g)[\Gamma_0] = \Mod(\Sigma_g)[x]$.

    It remains to prove the final claim, that if $\Gamma$ and $\Gamma'$ have the same surgery invariant $[\gamma] = [\gamma'] = x$, then $\Gamma$ and $\Gamma'$ are contained in the same component of $\widetilde{\mathcal K}_{d_4,g}$. By \Cref{theorem:previous}.1, the combinatorial surgery graph $\mathcal K_{d_4,g} = \tilde{\mathcal K}_{d_4,g}/ \Mod(\Sigma_g)$ is connected, so that $\Gamma'$ is connected via a sequence of surgeries to $\Gamma''$ of the form $\Gamma'' = f(\Gamma)$ for some $f \in \Mod(\Sigma_g)$. As the surgery invariants of $\Gamma$ and $\Gamma'$ (and hence $\Gamma''$) all equal $x \in H_1(\Sigma_g; \Z/2\Z)$ by hypothesis, it follows that $f  \in \Mod(\Sigma_g)[x]$. By the above, it follows that $f \in \Mod(\Sigma_g)[\Gamma]$, showing that there is a sequence of surgeries connecting $\Gamma$ to $\Gamma''$ and ultimately to $\Gamma'$.
\end{proof}

    \bibliography{references}{}

\begin{thebibliography}{DDPR21}

\bibitem[CS21]{strata2}
A.~Calderon and N.~Salter.
\newblock Higher spin mapping class groups and strata of abelian differentials
  over {T}eichm\"{u}ller space.
\newblock {\em Adv. Math.}, 389:Paper No. 107926, 56, 2021.

\bibitem[CS22a]{strata3}
A.~Calderon and N.~Salter.
\newblock Framed mapping class groups and the monodromy of strata of {A}belian
  differentials.
\newblock {\em J. Eur. Math. Soc.}, 2022.
\newblock To appear.

\bibitem[CS22b]{nicklei}
L.~Chen and N.~Salter.
\newblock Global fixed points of mapping class group actions and a theorem of
  {M}arkovic.
\newblock {\em J. Topol.}, 15(3):1311--1324, 2022.

\bibitem[DDPR21]{Dey}
S.~Dey, N.K. Dhawani, H.~Patil, and K.~Rajeevsarath.
\newblock Generating the liftable mapping class groups of regular cyclic
  covers.
\newblock {P}reprint, https://arxiv.org/abs/2111.01626, 2021.

\bibitem[FM12]{farmar}
B.~Farb and D.~Margalit.
\newblock {\em A primer on mapping class groups}, volume~49 of {\em Princeton
  Mathematical Series}.
\newblock Princeton University Press, Princeton, NJ, 2012.

\bibitem[HZ86]{har}
J.~Harer and D.~Zagier.
\newblock The {E}uler characteristic of the moduli space of curves.
\newblock {\em Invent. Math.}, 85(3):457--485, 1986.

\bibitem[Put08]{putmantrick}
A.~Putman.
\newblock A note on the connectivity of certain complexes associated to
  surfaces.
\newblock {\em Enseign. Math. (2)}, 54(3-4):287--301, 2008.

\bibitem[Sal19]{nicktoric}
N.~Salter.
\newblock Monodromy and vanishing cycles in toric surfaces.
\newblock {\em Invent. Math.}, 216(1):153--213, 2019.

\bibitem[San20]{karim2}
A.K. Sane.
\newblock Unicellular maps and filtrations of the mapping class group.
\newblock {P}reprint, https://arxiv.org/abs/2006.15880, 2020.

\bibitem[San21]{karim}
A.~K. Sane.
\newblock Curves on surfaces and surgeries.
\newblock {\em European J. Combin.}, 93:Paper No. 103281, 20, 2021.

\bibitem[WL72a]{leh2}
T.~Walsh and A.~B. Lehman.
\newblock Counting rooted maps by genus. {II}.
\newblock {\em J. Combinatorial Theory Ser. B}, 13:122--141, 1972.

\bibitem[WL72b]{leh}
T.~R.~S. Walsh and A.~B. Lehman.
\newblock Counting rooted maps by genus. {I}.
\newblock {\em J. Combinatorial Theory Ser. B}, 13:192--218, 1972.

\end{thebibliography}
	\bibliographystyle{alpha}

\end{document}